\newcommand{\NN}{\mathbb{N}}
\newcommand{\RR}{\mathbb{R}}
\newcommand{\ZZ}{\mathbb{Z}}
\newcommand{\TT}{\mathbb{T}}
\newcommand{\AAA}{\mathcal{A}}
\newcommand{\BBB}{\mathcal{B}}
\newcommand{\EEE}{\mathcal{E}}
\newcommand{\HHH}{\mathcal{H}}
\newcommand{\MMM}{\mathcal{M}}
\newcommand{\OOO}{\mathcal{O}}
\newcommand{\TTT}{\mathcal{T}}
\newcommand{\llim}{\varliminf}
\newcommand{\ulim}{\varlimsup}
\newcommand{\eps}{\varepsilon}
\newcommand{\ph}{\varphi}
\newcommand{\coloneqq}{\mathrel{\mathop:}=}
\DeclareMathOperator{\supp}{supp}
\newcommand{\foot}[1]{}
\numberwithin{equation}{section}
\theoremstyle{plain}
\newtheorem{theorem}{Theorem}[section]
\newtheorem{lemma}[theorem]{Lemma}
\theoremstyle{definition}
\newtheorem{definition}[theorem]{Definition}
\theoremstyle{remark}
\newtheorem{example}[theorem]{Example}
\newtheorem{exercise}{Exercise}[section]
\title{Measure Theory through Dynamical Eyes}
\author{Vaughn Climenhaga}
\address{Department of Mathematics \\ University of Houston \\ Houston, TX 77005, USA}
\email{climenha@math.uh.edu}
\author{Anatole Katok}
\address{Department of Mathematics \\ McAllister Building \\ Pennsylvania State University \\ University Park, PA 16802, USA}
\email{katok\_a@math.psu.edu}
\begin{document}

\maketitle

These notes are a somewhat embellished version of two rather informal evening review sessions given by the second author on July 14 and 15, 2008 at the 
Bedlewo summer school, 
 which provide a brief overview of some of the basics of measure theory and its applications to dynamics which are foundational to the various courses at the school.

A number of results are quoted without proof, or with at most a bare sketch of a proof; references are given where full proofs may be found. Similarly, most basic definitions are assumed to be known, and we defer their reiteration to the references.

In light of the above, we emphasise that this presentation is \emph{not} meant to be either comprehensive or self-contained; the reader is assumed to have some knowledge of the basic concepts of
measure theory, ergodic theory, and hyperbolic dynamics, which will appear without any formal introduction.  The tone is meant to be conversational rather than authoritative, and the goal is to give a general idea of various concepts which should eventually be examined thoroughly in the appropriate references.

For a full presentation of the concepts in \S1, which concerns abstract measure theory, we refer the reader to Halmos' book~\cite{pH78} (for more basic facts) and to  Rokhlin's article~\cite{vR62}.  The topics in measurable dynamics mentioned in \S2 receive a more complete treatment in a later article by Rokhlin~\cite{vR67}, and the account in \S3 of the relationship between foliations and measures in smooth dynamics draws on Barreira and Pesin's book~\cite{BP07}, along with two articles by Ledrappier and Young~\cite{LY85a,LY85b}.

The first author would like to thank Andrey Gogolev and Misha Guysinsky for providing useful references and comments.


\section{Abstract Measure Theory}

\subsection{Points, sets, and functions}

There are three ``lenses'' through which we can view measure theory; we may think of it in terms of \emph{points}, in terms of \emph{sets}, or in terms of \emph{functions}.  To put that a little more concretely, suppose we have a triple $(X,\TTT,\mu)$ comprising a measurable space, a $\sigma$-algebra, and a measure. Then we may focus our attention either on the space $X$ (and concern ourselves with points), or on the $\sigma$-algebra $\TTT$ (and concern ourselves with sets), or on the space $L^2(X,\TTT,\mu)$ (and concern ourselves with functions).

All three points of view play an important role in dynamics, and various definitions and results can be given in terms of any of the three. 
We will see later that the last two  are completely equivalent in the greatest generality but the first  requires certain additional, albeit natural,  assumptions explained  in Section~\ref{SLebesguespaces} (see Theorem~\ref{thm:Lebesgue2} and Definition \ref{defLebesgue}).
 Of particular interest to us will be the correspondence between partitions of the space $X$,  sub-$\sigma$-algebras of $\TTT$, and subspaces of $L^2(X,\TTT,\mu)$.

First let us consider the set of all partitions of $X$.  This is a partially ordered set, with ordering given by refinement; given two partitions $\xi$, $\eta$, we say that $\xi$ is a \emph{refinement} of $\eta$, written $\xi\geq \eta$, if and only if every $C\in\xi$ is contained in some $D\in\eta$.  In this case, we also say that $\eta$ is a \emph{coarsening} of $\xi$.  The finest partition (which in this notation may be thought of as the ``largest'') is the partition into points, denoted $\eps$, while the coarsest (the ``smallest'') is the trivial partition $\{ X \}$, denoted $\nu$.

As on any partially ordered set, we have a notion of \emph{join} and \emph{meet}, corresponding to least upper bound and greatest lower bound, respectively.  Following~\cite{vR67}, we shall refer to these as the \emph{product} and \emph{intersection}, and we briefly recall their definitions. Given two partitions $\xi$ and $\eta$, their product (join) is
\begin{equation}\label{eqn:join}
\xi \vee \eta \coloneqq \{\, C \cap D \mid C \in\xi,\ D\in\eta\,\}.
\end{equation}
This is the coarsest partition which refines both $\xi$ and $\eta$, and is also sometimes referred to as the \emph{joint
partition}. The intersection (meet) of $\xi$ and $\eta$ is the finest partition which coarsens both $\xi$ and $\eta$, and is denoted $\xi\wedge\eta$; in general, there is no analogue of~\eqref{eqn:join} for $\xi\wedge\eta$.

So much for partitions; what do these have to do with $\sigma$-algebras, or with $L^2$-spaces?  Given a partition $\xi$, we may consider the collection of all measurable subsets $A\subset X$ which are unions of elements of $\xi$; this collection forms a sub-$\sigma$-algebra of $\TTT$, which we denote by $\BBB(\xi)$.  We will see later that this correspondence is far from injective; for example, certain partitions whose elements are countable sets are associated with the  trivial $\sigma$-algebra, see Example~\ref{eg:nonmeas}.

 Similarly, we may consider the collection of all square integrable  functions which are constant on elements of $\xi$; this collection (more precisely, the collection of equivalence classes of such functions) forms the subspace $L^2(X,\BBB(\xi),\mu) \subset L^2(X,\TTT,\mu)$.

\begin{example}\label{eg:lines}
Let $X=[0,1]\times [0,1]$ be the unit square with Lebesgue measure $\lambda$, and $\TTT$ the Borel $\sigma$-algebra.  Let $\xi=\{\, \{x\} \times [0,1] \mid x\in [0,1] \,\}$ be the partition into vertical lines; then $\BBB(\xi)$ is the sub-$\sigma$-algebra consisting of all sets of the form $E\times [0,1]$, where $E\subset [0,1]$ is Borel, and $L^2(X,\BBB(\xi),\lambda)$ is the space of all square-integrable functions which depend only on the $x$-coordinate.  The latter is canonically isomorphic to the space of square-integrable functions on the unit interval.
\end{example}

The class of sub-$\sigma$-algebras of $\TTT$ and the class of subspaces of $L^2(X,\TTT,\mu)$ are both partially ordered by containment, and this partial ordering is preserved by the correspondences described above.  Thus the map $\xi \mapsto \BBB(\xi)$ is a morphism of partially ordered sets; it is natural to ask whether this morphism is injective (and hence invertible) on a certain class of partitions, and we will return to this question eventually.  First, however, we turn to the question of classifying measure spaces, and hence the associated classes of partitions and $\sigma$-algebras, since the end result turns out to be relatively simple.

\subsection{Lebesgue spaces}\label{SLebesguespaces}

It is a somewhat serendipitous fact that although one may consider many different measure spaces $(X,\TTT,\mu)$, which on the face of it are quite different from each other, all of the examples in which we will be interested actually fall into a relatively simple classification.

To elucidate this statement, let us consider the two fundamental examples of measure spaces.  The simplest sort of measure space is an \emph{atomic} space, in which $X$ is a finite or countable set, $\TTT$ is the entire power set of $X$, and $\mu$ is defined by the sequence of numbers $\mu(x_i), x_i\in X$.  Each of the points $x_i$ is an \emph{atom} -- that is, a measurable set $A$ of positive measure such that every subset $B\subset A$ has either $\mu(B)=0$ or $\mu(B)=\mu(A)$.  Atomic spaces are discrete objects, which belong to combinatorics as much as to measure theory, and do not require the full power of the latter theory.

At the other end of the spectrum stand the \emph{non-atomic spaces}, in which every set of positive measure can be decomposed into two subsets of smaller positive measure.  The easiest example of such a space is the interval $[0,1]$ with Lebesgue measure, where the $\sigma$-algebra $\TTT$ is the collection of Lebesgue sets.  In fact, up to isomorphism and setting aside examples which are for our purposes pathological, this is the \emph{only} example of such a space.



What does it mean for measure spaces to be isomorphic?  The most immediate (and essentially correct) idea is to require  existence of a bijection between the spaces which carries measurable sets into  measurable sets both ways and preserves the 
measure.  There are some technicalities  related with   different ways of defining 
$\sigma$-algebras of measurable sets (e.g.\ Borel  or Lebesgue on the interval)  and also ignoring some   ``bad'' sets of measure zero (e.g.\  cardinality of the space may be artificially increased by adding a set of points of measure zero of large cardinality). These inessential problems aside,   there are  examples of  isomorphism which  look striking on the surface. 

\begin{example}\label{Peano} Consider   the unit interval $I$  and the unit square  $I\times I$  with Lebesgue measure.  The standard construction of the {\em Peano curve}  with division of  $I$ into $4^n$ basic intervals $\left(\frac a{4^n}, \frac{a+1}{4^n}\right)$ and $I\times I$ into $4^n$ equal squares  provides a continuous surjective map $f\colon I\to I\times I$   which preserves the measure of any union of basic intervals and hence   of any measurable set. While this map is obviously not bijective,  it is  bijective between complements of   certain measure zero sets: namely, $A\subset I$, the union of endpoints of all basic intervals, and $B\subset I\times I$, the union of  boundaries of all squares  involved in the construction. 
\end{example}

To discuss the general case, we first need some definitions.

\begin{definition}\label{def:sep}
Introduce a pseudo-metric on the (measured) $\sigma$-algebra\footnote{Because we deal with measure spaces (not just measurable spaces), we consider not just the $\sigma$-algebra $\TTT$, but also the measure $\mu$ it carries.  This will be implicit in our discussion throughout this section.} $\TTT$ by the formula
\[
d_\mu(A,B)=\mu(A \bigtriangleup B),
\]
where $A\bigtriangleup B$ denotes the symmetric difference $(A\cup B) \setminus (A\cap B)$.  We say that two sets $A,B\in \TTT$ are \emph{equivalent mod zero} if $d_\mu(A,B)=0$, and write $A\circeq B$.

If we pass to the quotient space $\TTT / \circeq$, we obtain a true metric space; we say that the $\sigma$-algebra is \emph{separable} if this metric space is separable.  That is, $\TTT$ is separable if and only if there exists a countable set $\{A_n\}_{n\in \NN}$ which is dense in $\TTT$ with the $d_\mu$ pseudo-metric.

The \emph{completion} of $\TTT$ is the $\sigma$-algebra generated by $\TTT$ together with all subsets of null sets (that is, sets $A$ with $\mu(A)=0$).  
We say that two $\sigma$-algebras $\TTT,\TTT'$ are equivalent mod zero if they have the same completion, and write $\TTT \circeq \TTT'$.
\end{definition}

\begin{exercise}\label{ex:sep2}
Recall that if $\AAA\subset \TTT$ is any collection of sets, then $\sigma(\AAA)$, the $\sigma$-algebra generated by $\AAA$, is the smallest $\sigma$-algebra that contains $\AAA$.  Show that if $\TTT$ is equivalent mod zero to a $\sigma$-algebra generated by a countable collection of sets, then $\TTT$ is separable.
\end{exercise}

In~\cite{vR62}, a stronger definition of separable is used, which implies the condition in Exercise \ref{ex:sep2}.  We follow the definition in~\cite{pH78}.  When the measure is non-atomic, the two definitions are equivalent.

\begin{exercise}\label{ex:sep3}
Show that if $X$ is a separable metric space, $\TTT$ is the $\sigma$-algebra of Borel subsets of $X$, and $\mu$ is any probability measure, then $\TTT$ is separable as in Definition~\ref{def:sep}.
\end{exercise}

For the sake of simplicity in what follows, we will always consider sets (of positive measure)  and $\sigma$-algebras up to equivalence mod zero (in particular, we will not distinguish between a $\sigma$-algebra and its completion), and will write $=$ in place of $\circeq$.

We say that two $\sigma$-algebras $\TTT,\TTT'$ are \emph{isomorphic} if there exists a bijection $\rho\colon \TTT \to \TTT'$ which preserves measure:
\begin{equation}\label{eqn:meas-pres}
\mu'(\rho(E)) = \mu(E)
\end{equation}
and which respects unions and complements (and hence intersections as well)
\begin{align}\label{eqn:union}
\rho \left( \bigcup_{n=1}^\infty E_i \right) &= \bigcup_{n=1}^\infty \rho(E_i), \\
\label{eqn:comp}
\rho(X \setminus E) &= X' \setminus \rho(E).
\end{align}
Obviously  one way (but not the only way) to produce an isomorphism of $\sigma$-algebras is to have 
an isomorphism $\pi$ between measure spaces as in Example~\ref{Peano} and then to set $\rho(E) = \pi^{-1}(E)$.

Up to this notion of isomorphism, all the $\sigma$-algebras with which we are concerned can be studied by one ``master'' example:\footnote{A state of affairs which Tolkein would surely render quite poetically, particularly if we were to adopt a slightly different line of exposition and consider \emph{$\sigma$-rings} instead of $\sigma$-algebras.}

\begin{theorem}\label{thm:Lebesgue}
A separable (measured, complete) $\sigma$-algebra with no atoms is isomorphic to the $\sigma$-algebra of Lebesgue sets on the unit interval.
\end{theorem}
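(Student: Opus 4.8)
The plan is to construct an explicit measure-preserving coding of $X$ into the unit interval whose induced map on sets is the desired isomorphism. Since an isomorphism preserves measure, we may assume $\mu(X)=1$. By separability there is a countable family $\{A_n\}_{n\in\NN}$ that is $d_\mu$-dense in $\TTT$; passing to a rapidly converging subsequence and invoking Borel--Cantelli shows that every $B\in\TTT$ agrees mod zero with a set built from the $A_n$ by countable operations, so $\{A_n\}$ generates $\TTT$ mod zero (compare Exercise~\ref{ex:sep2}). Let $\xi_n$ be the finite partition generated by $A_1,\dots,A_n$; then $\xi_1\le\xi_2\le\cdots$ and the $\BBB(\xi_n)$ increase to $\TTT$ mod zero.

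Next I would lay the atoms of these partitions along $[0,1]$. Fix a linear order on the atoms of each $\xi_n$ (say by the binary address recording membership in $A_1,\dots,A_n$). To each atom $C$ I assign a half-open subinterval $I(C)\subset[0,1]$ of length $\mu(C)$, placing the intervals for the atoms of $\xi_n$ consecutively in this order. Because each atom of $\xi_{n+1}$ arises by intersecting some $C\in\xi_n$ with $A_{n+1}$ or its complement, the (at most two) children of $C$ have measures summing to $\mu(C)$, so their intervals fit consecutively inside $I(C)$; thus the assignment is nested, with $C'\subset C$ implying $I(C')\subset I(C)$. For $x\in X$ let $C_n(x)\in\xi_n$ be the atom containing $x$; the intervals $I(C_n(x))$ are then nested with lengths $\mu(C_n(x))$, and I define $\Phi(x)$ to be the unique point in their intersection.

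The crux---and where non-atomicity enters---is to show $\mu(C_n(x))\to 0$, so that $\Phi(x)$ is well defined. Put $D_x=\bigcap_n C_n(x)$, so $\mu(D_x)=\lim_n \mu(C_n(x))$ by continuity from above. I claim $D_x$ is a measure-theoretic atom of $\TTT$: for any $B\in\TTT$, approximate $B$ in $d_\mu$ by sets $B_n\in\BBB(\xi_n)$; since each $B_n$ either contains $C_n(x)$ or is disjoint from it, one has $\mu(B\cap D_x)=\lim_n\mu(B_n\cap D_x)\in\{0,\mu(D_x)\}$. As $\TTT$ has no atoms, $\mu(D_x)=0$, which is exactly the required shrinking. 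This is the step I expect to be the main obstacle, both conceptually (it is the only place the hypothesis is used) and technically (the approximation must be controlled mod zero).

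It then remains to check that $\Phi$ transports $\mu$ to Lebesgue measure $\lambda$ and induces an isomorphism of $\sigma$-algebras. From the construction, $\Phi^{-1}(\mathrm{int}\,I(C))\subset C\subset \Phi^{-1}(\overline{I(C)})$ for every atom $C$, and the preimages of the countably many interval endpoints are null (again an atom-type argument), so $\mu(\Phi^{-1}(I(C)))=\mu(C)=\lambda(I(C))$. The intervals $I(C)$, over all $C\in\bigcup_n\xi_n$, generate the Borel sets of $[0,1]$ mod zero, whence $\Phi_*\mu=\lambda$. Finally, define $\rho(B)=\Phi^{-1}(B)$ on the measure algebra of Lebesgue sets: it respects unions and complements, preserves measure since $\Phi_*\mu=\lambda$, and is therefore a $d_\mu$-isometry, hence injective on the measure algebra; it is surjective onto $\TTT$ because its image is a $\sigma$-algebra containing every $C\in\xi_n$, and these generate $\TTT$. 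Working throughout mod zero and with completions, per the standing convention, $\rho$ is the required isomorphism onto the Lebesgue sets of the unit interval.
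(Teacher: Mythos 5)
Your proof is correct, and it takes a genuinely different route from the paper's, even though the combinatorial skeleton is identical: both use the dyadic partitions $\xi_n$ generated by a $d_\mu$-dense countable family and the lexicographic stacking of subintervals of matching measure. The paper never constructs a point map: it defines $\rho$ set-by-set on the dense subalgebra $\hat\TTT = \bigcup_n \BBB(\xi_n)$, observes that $\rho$ is an isometry for the metrics $d_\mu$ and $d_\lambda$, and extends by continuity to the whole measure algebra, invoking non-atomicity only at the end for surjectivity. You instead build the coding map $\Phi\colon X\to[0,1]$ first and induce $\rho = \Phi^{-1}$ on measure algebras. Your route buys two things. First, it supplies in full the step the paper glosses: the assertion there that ``the measure of the sets in $\xi_n$ goes to 0'' is exactly your chain argument---$D_x = \bigcap_n C_n(x)$ is an atom or null because sets in $\BBB(\xi_n)$ either contain or miss $C_n(x)$ and $\hat\TTT$ is dense---and you correctly identify this as the sole point where non-atomicity enters. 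Second, it illuminates the remark the paper makes right after its proof: a point map like $\Phi$ may fail to be injective or to have full-measure image (the ``holes'' that completeness must exclude in Theorem~\ref{thm:Lebesgue2}), but since you use only $\Phi_*\mu = \lambda$ and preimages, none of that matters at the $\sigma$-algebra level; your argument thus previews the proof of Theorem~\ref{thm:Lebesgue2}, where the same map is upgraded to a point isomorphism under the extra hypothesis. What the paper's route buys in exchange is freedom from the pointwise technicalities that your version must absorb: the nested half-open intervals $I(C_n(x))$ can have empty intersection (the limit point may be a shared right endpoint), so $\Phi(x)$ should be defined as the unique point of $\bigcap_n \overline{I(C_n(x))}$; measurability of $\Phi$ and of the endpoint preimages rests on working with the completion of $\TTT$, per the standing convention you cite; and the claim that the $I(C)$ generate the Borel sets deserves the one-line justification that interval lengths shrink to zero along \emph{every} chain (again by your atom argument), after which $\Phi_*\mu = \lambda$ follows by uniqueness of extension from the algebra of finite unions of the $I(C)$. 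These are line-level repairs at exactly the spots you flag, so the proof stands as a valid, somewhat more concrete alternative to the paper's isometric-extension argument.
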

\begin{proof}
For complete details of the proof, see \cite[Section 41, Theorem C]{pH78}.  Here we give the main ideas.

Given a countable collection of sets $A_n\in \TTT$, write $A_n^0 = A_n$ and $A_n^1 = X\setminus A_n$, and let $\xi_n=\bigvee_{i=1}^n \{A_n^0, A_n^1\}$. This is an  increasing sequence of partitions of $X$, and because $\TTT$ is separable, we may take the sets $A_n$ to be such that $\hat\TTT := \bigcup_{n\geq 1} \BBB(\xi_n)$ is dense in $\TTT$.  Moreover, $\xi_n$ has $\leq 2^n$ elements, which can be indexed by words $w = w_1\cdots w_n \in \{0,1\}^n$ as follows:
\[
A_w = A_1^{w_1} \cap A_2^{w_2} \cap \cdots \cap A_n^{w_n}.
\]
Write $\TTT'$ for the $\sigma$-algebra of Lebesgue sets on the unit interval.  We can define a map $\rho \colon \hat\TTT \to \TTT'$ as follows.
\begin{enumerate}
\item For a fixed $n$, order the elements of $\xi_n$ lexicographically: for example, with $n=3$ we have
\[
A_{000} \prec A_{001} \prec A_{010} \prec A_{011} \prec \cdots \prec A_{111}.
\]
\item Identify these sets with subintervals of $[0,1]$ with the same measure and the same order:  thus
\begin{align*}
\rho(A_{000}) &= [0,\mu(A_{000})), \\
\rho(A_{001}) &= [\mu(A_{000}),\mu(A_{000})+\mu(A_{001})),
\end{align*}
and so on, as shown in Figure~\ref{fig:lebesgue}.  In general we have
\begin{equation}\label{eqn:rhoAw}
\rho(A_w) = \left[\sum_{v\prec w} \mu(A_v), \sum_{v\preceq w} \mu(A_v)\right),
\end{equation}
where the sums are over words $v$ of the same length as $w$.  Note that the image is empty whenever the sums are equal, which happens exactly when $\mu(A_w)=0$.  In Figure~\ref{fig:lebesgue}, for example, we have $\rho(A_{011}) = \rho(A_{111}) = \emptyset$.
\end{enumerate}

\begin{figure}[htbp]
\includegraphics{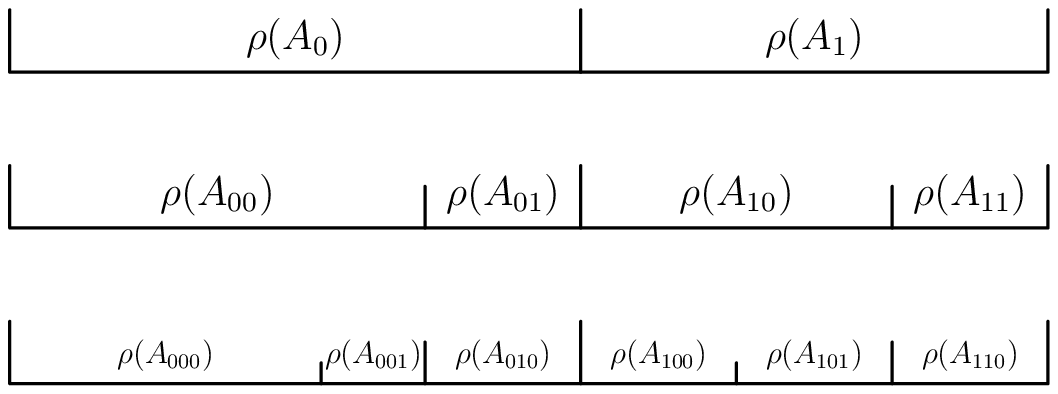}
\caption{Defining a map $\rho\colon \hat\TTT \to \TTT'$.}
\label{fig:lebesgue}
\end{figure}

It is clear from the construction that $\rho$ satisfies~\eqref{eqn:union} whenever the union is an element of $\hat\TTT$.  Furthermore, $\rho$ is an isometry with respect to the metrics $d_\mu$ and $d_\lambda$, where $\lambda$ is Lebesgue measure on the interval, and so it can be extended from the dense set $\hat\TTT$ to all of $\TTT$.  The identities~\eqref{eqn:meas-pres} and~\eqref{eqn:comp} hold on  $\hat\TTT$, and extend to $\TTT$ by continuity.

Finally, because $\TTT$ is non-atomic, the measure of the sets in $\xi_n$ goes to 0, and so the length of their images under $\rho$ goes to 0 as well, which shows that $\rho(\TTT) = \TTT'$. 
\end{proof}

\begin{exercise}\label{ex:lex-sum}
Show that the sum appearing in~\eqref{eqn:rhoAw} can also be written as
\[
\sum_{v\prec w} \mu(A_v) = \sum_{\stackrel{1\leq k\leq n}{w_k = 1}} \mu(A_{w_1\cdots w_{k-1} 0}),
\]
where $n$ is the length of the word $w$.
\end{exercise}

The proof of Theorem~\ref{thm:Lebesgue} describes the construction of a \emph{$\sigma$-algebra} isomorphism.  At a first glance, it may appear that this also creates an isomorphism between the measure spaces themselves, but in fact, the proof as it stands may not produce such an isomorphism.  
The problem  is the potential presence of ``holes''  in the  space $X$  distributed among its points in a non-measurable way. 
In order to give a classification result for measure spaces themselves, we need one further condition in addition to separability which prevents the appearance of such ``holes''. 


Let $\xi_1 = \{C_1, \dots, C_n\}$ be a finite partition of $X$ into measurable sets, and let $\TTT_1 = \BBB(\xi_1)$ be the $\sigma$-algebra which contains all unions of elements of $\xi_1$, so that $\TTT_1$ contains $2^n$ sets.  Partitioning each $C_i$ into $C_{i,1},\dots,C_{i,k_i}$, we obtain a finer partition $\xi_2$ and a larger $\sigma$-algebra $\TTT_2 = \BBB(\xi_2)$ whose elements are unions of none, some, or all of the $C_{i,j}$. Iterating this procedure, we have a
sequence of partitions
\begin{equation}\label{eqn:parts}
\xi_1 < \xi_2 < \cdots,
\end{equation}
each of which is a refinement of the previous partition, and a sequence of $\sigma$-algebras
\begin{equation}\label{eqn:salgs}
\TTT_1 \subset \TTT_2 \subset \cdots.
\end{equation}
This is obviously reminiscent of the construction from the proof of Theorem~\ref{thm:Lebesgue}. 
An even more useful image to keep in mind here is the standard picture of the construction of a Cantor set, in which the unit interval is first divided into two pieces, then four, then eight, and so on -- these ``cylinders'' (to use the terminology arising from symbolic dynamics) are the various sets $C_i$, $C_{i,j}$, etc.

We may consider the ``limit'' of the sequence~\eqref{eqn:parts}:
\begin{equation}\label{eqn:xifromxin}
\xi = \bigvee_{n=1}^\infty \xi_n.
\end{equation}
Each element of $\xi$ corresponds to a ``funnel''
\begin{equation}\label{eqn:funnel}
C_{i_1} \supset C_{i_1,i_2} \supset C_{i_1,i_2,i_3} \supset \cdots
\end{equation}
of decreasing subsets within the sequence of partitions; the intersection of all the sets in such a funnel is an element of $\xi$.

The sequence~\eqref{eqn:parts} is a \emph{basis} if it generates both the $\sigma$-algebra $\TTT$ and the space $X$, as follows:
\begin{enumerate}
\item the associated $\sigma$-algebras $\TTT_n
\coloneqq \BBB(\xi_n)$ from~\eqref{eqn:salgs} have the property that $\bigcup_{n\geq 1} \TTT_n$ generates $\TTT$;
\item it generates the space $X$; that is, every ``funnel'' $C_{i_1} \supset C_{i_1,i_2} \supset \cdots$ as in~\eqref{eqn:funnel} has intersection containing at most one point.  Equivalently, any two points $x$ and $y$ are separated by some partition $\xi_n$, and so $\xi \coloneqq \bigvee_{n=1}^\infty \xi_n = \eps$, the partition into points.
\end{enumerate}


Note that the existence of an increasing sequence of finite or countable partitions satisfying~(1) is equivalent to separability of the
$\sigma$-algebra.

It is often convenient to choose a sequence $\xi_n$ such that at each stage, each cylinder set $C$ is partitioned into
exactly two smaller sets. This gives a one-to-one correspondence between sequences in $\Sigma_2^+ \coloneqq \{0,1\}^\NN$ and ``funnels'' as in~\eqref{eqn:funnel}.

\begin{exercise}
Determine the correspondence between the above definition of a basis and the definition given in \S 1.2 of~\cite{vR67}.
\end{exercise}

Since each ``funnel'' corresponds to some element of $\TTT$ which is either a singleton or empty, we have associated to each Borel subset of $\Sigma_2^+$ an element of $\TTT$, and so $\mu$ yields a measure on $\Sigma_2^+$. Thus we have a notion of ``almost all funnels'' -- we say that the basis is \emph{complete} if almost every funnel contains exactly one point.\footnote{This is not to be confused with the notion of completeness for $\sigma$-algebras.} That is, the set of funnels whose intersection is empty should be measurable, and should have measure zero.  Equivalently, a basis defines a map from $X$ to $\Sigma_2^+$ which takes each point to the ``funnel'' containing it; the basis is complete if the image of this map has full measure.

The existence of a complete basis is the final invariant needed to classify ``nice'' measure spaces.

\begin{theorem}\label{thm:Lebesgue2}
If $(X,\TTT,\mu)$ is separable, non-atomic, and possesses a complete basis, then it is isomorphic to Lebesgue measure on the unit interval.
\end{theorem}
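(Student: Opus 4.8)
The plan is to promote the $\sigma$-algebra isomorphism already built in the proof of Theorem~\ref{thm:Lebesgue} to an honest isomorphism of measure spaces, using the complete basis to produce an actual point map. Concretely, I would fix a basis $\xi_1 < \xi_2 < \cdots$ in which each cylinder is split into exactly two pieces at every stage, so that the funnels are in bijection with sequences in $\Sigma_2^+ = \{0,1\}^\NN$. Such a basis defines a coding map $\phi\colon X \to \Sigma_2^+$ sending each point $x$ to the address of the funnel containing it. Composing with the lexicographic, cumulative-measure assignment of cylinders to subintervals from the proof of Theorem~\ref{thm:Lebesgue}, I obtain a map $\psi\colon \Sigma_2^+ \to [0,1]$, and I set $\pi \coloneqq \psi \circ \phi$ as the candidate isomorphism $X \to [0,1]$. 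Because the space is non-atomic, the measures of the cylinders along any funnel tend to $0$, so the corresponding nested intervals shrink to a single point and $\psi$ is well defined.

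Next I would check that $\pi$ is measurable and measure-preserving. By construction $\pi^{-1}$ of the subinterval $\rho(A_w)$ equals, mod zero, the cylinder $A_w$, and $\mu(A_w)$ is exactly the length $\lambda(\rho(A_w))$, just as in Theorem~\ref{thm:Lebesgue}; since the cylinders generate $\TTT$ and their images generate the Lebesgue $\sigma$-algebra, both measurability of $\pi$ and the identity $\pi_*\mu = \lambda$ follow by the usual monotone-class extension. Injectivity is handled separately: condition~(2) in the definition of a basis guarantees that distinct points lie in distinct funnels, so $\phi$ is injective, and $\psi$ fails to be injective only on the addresses mapping to the countably many interval endpoints (together with the cylinders of measure zero), which form a null set. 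Hence $\pi$ is injective mod zero.

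The hypothesis of a \emph{complete} basis enters precisely at the point of showing that $\pi$ lands on a full-measure subset of $[0,1]$ -- that is, that there are no ``holes.'' Completeness asserts that almost every funnel contains exactly one point, equivalently that $\phi(X)$ has full measure in $\Sigma_2^+$; pushing forward by the measure-preserving $\psi$, this says exactly that $\pi(X)$ omits only a Lebesgue-null set. Without completeness a positive-measure family of addresses could correspond to empty funnels, producing genuine holes that no point of $X$ realizes. Granting completeness, $\pi$ restricts to a bijection between full-measure subsets of $X$ and of $[0,1]$, and this bijection is bimeasurable because $\pi$ carries the generating cylinders to the generating intervals in both directions; thus $\pi$ is the desired isomorphism of measure spaces.

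The main difficulty is not a single computation but the careful bookkeeping of exceptional null sets: one must discard the points lying in empty or degenerate funnels, the cylinders of measure zero, the dyadic-type ambiguities of the interval coding, and the complement of $\pi(X)$, and then verify that the union of all these sets is null on both sides so that the surviving map is a genuine mod-zero bijection. Completeness is exactly the hypothesis that tames the most dangerous of these -- the possibility, warned against in the surrounding discussion, that a non-measurable or positive-measure collection of holes has been distributed among the points of $X$.
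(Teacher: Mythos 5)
Your proposal is correct and takes essentially the same route as the paper's proof: your point map $\pi = \psi\circ\phi$ is exactly the paper's correspondence sending a funnel $\bigcap_{n\geq 1} A_{x_1\cdots x_n}$ to the point $\sum_{\{\,n \,\mid\, x_n=1\,\}}\mu(A_{x_1\cdots x_{n-1}0})$, with completeness invoked at the same spot to upgrade the $\sigma$-algebra isomorphism of Theorem~\ref{thm:Lebesgue} to a mod-zero bijection between sets of full measure. Your bookkeeping of the exceptional null sets (interval endpoints of the various ranks, measure-zero cylinders, empty funnels) and your derivation of measurability from the fact that basis sets correspond to finite unions of intervals likewise match the paper's argument.
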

\begin{proof}
Full details can be found in \cite{vR62}; here we describe the main idea, which is that with the completeness  assumption,  the argument from the proof of  Theorem~\ref{thm:Lebesgue} indeed gives    an isomorphism  of measure spaces.  Using the notation from that proof, every infinite intersection $\bigcap_{n\geq 1} A_{x_1\dots x_n}$ corresponds to a  point in the interval, namely
\begin{equation}\label{eqn:limsum}
\lim_{n\to\infty} \sum_{v \prec x_1\cdots x_n} \mu(A_v) = \sum_{\stackrel{n\geq 1}{x_n =1}} \mu(A_{x_1 \cdots x_{n-1} 0}).
\end{equation}
With the exception of a 
countable set (the endpoints of basic intervals of various ranks), every point is the image of at most one ``funnel''.  Completeness guarantees that the is correspondence is indeed   a bijection between sets of full measure. Measurability follows from the fact that   images of  the sets from a basis  are finite unions of intervals.
\end{proof}

In fact, all the measure spaces of interest to us are separable and complete, as the following series of exercises shows.\foot{Moved these exercises together and placed them after the theorem.  Does the sequence of exercises as presented here provide sufficient guidance?}

\begin{exercise}\label{ex:uncountablesum}
Let $X$ be a metric space and fix $x\in X$.  Given $r>0$, let $S_r = \partial B(x,r)$ be the boundary of the ball of radius $r$ centred at $x$.  Show that for any probability measure $\nu$ and any fixed $x$, at most countably many of the $S_r$ have positive measure.
\end{exercise}

\begin{exercise}\label{ex:basis}
Let $X$ be a separable metric space, $\TTT$ the $\sigma$-algebra of Borel sets, and $\nu$ a probability measure.  Use Exercise~\ref{ex:uncountablesum} to show that $(X,\TTT,\nu)$ has a basis $\{\xi_n\}_{n\geq 1}$ such that all boundaries have zero measure -- that is, $\nu(C)=0$ for all $C\in \xi_n$ and $n\geq 1$.
\end{exercise}

\begin{exercise}\label{ex:sepcompmet}
Let $(X,\TTT,\nu)$ be as in Exercise~\ref{ex:basis}, and suppose that in addition $X$ is complete (as a metric space).  Show that the basis constructed in Exercise~\ref{ex:basis} is complete (as a basis).
\end{exercise}

\begin{definition}\label{defLebesgue}
A separable measure space $(X,\TTT,\mu)$ with a complete basis is called a \emph{Lebesgue space.}
\end{definition}

In light of Definition~\ref{defLebesgue}, we can rephrase Exercise~\ref{ex:sepcompmet} as the result that every separable complete metric space equipped with a Borel probability measure is a Lebesgue space.  By Theorem~\ref{thm:Lebesgue2}, every Lebesgue space is isomorphic to the union of unit interval with at most countably many atoms.

It is also worth noting that any separable measure space admits a completion, just as is the case for metric spaces.  The procedure is quite simple; take a basis for $X$ which is not complete, and add to $X$ one point corresponding to each empty ``funnel''.  Thus we need not concern ourselves with non-complete spaces.

\begin{exercise}
Show that every separable measure space which is \emph{not} complete is isomorphic to a set of outer measure one in a Lebesgue space.
\end{exercise}

Thus we have elucidated the promised difference  between the language of sets and that of points: separability is sufficient for the first to  lead to the standard model, while for the second, completeness is also needed.  This  distinction is important theoretically; in particular, it allows us to  separate results which hold  for arbitrary separable measure spaces  (such as most ergodic theorems) from those which hold in Lebesgue spaces (such as von Neumann's isomorphism theorem for dynamical systems with pure point spectrum).

However, non-Lebesgue measure spaces are at least as pathological  for  ``normal mathematics'' as non-measurable sets  or  sets of cardinality higher than continuum. In particular, as a consequence of Exercises~\ref{ex:uncountablesum}--\ref{ex:sepcompmet},  the measure spaces which arise in conjuction with dynamics are all   Lebesgue spaces, so from now on we will restrict our attention to those. 

\subsection{Partitions and $\sigma$-algebras}

We have already seen a simple procedure for associating to each partition $\xi$ of $X$ a sub-$\sigma$-algebra $\BBB(\xi)$ of $\TTT$, and it is natural to ask whether there is a natural class of partitions on which  the morphism $\BBB$ is one-to-one, so that it can be inverted. The answer turns out to be positive if  one considers equivalence classes of partitions  mod zero. 

\begin{definition}
Two partitions $\xi$, $\eta$ of $X$ are \emph{equivalent mod zero} if there exists a set $E \subset X$ of full measure such that
\[
\{C \cap E \mid C\in \xi\} = \{ D \cap E \mid D \in\eta\},
\]
in which case we write $\xi \circeq \eta$.
\end{definition}

As with sets and $\sigma$-algebras, we will always consider partitions up to equivalence mod zero, and will again write $=$ in place of $\circeq$.

\begin{theorem}\label{thm:algtopart}
Given a separable measure space $(X,\TTT,\mu)$ and a sub-$\sigma$-algebra $\AAA\subset \TTT$, there exists a partition $\xi$ of $X$ into measurable sets such that $\AAA$ and $\BBB(\xi)$ are equivalent mod zero.
\end{theorem}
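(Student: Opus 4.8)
The plan is to realize $\AAA$ as the $\sigma$-algebra generated by a countable family, build the associated ``funnel'' partition exactly as in the proof of Theorem~\ref{thm:Lebesgue}, and then verify the two inclusions $\AAA\subseteq\BBB(\xi)$ and $\BBB(\xi)\subseteq\AAA$ separately (everything mod zero, as agreed in the text).

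First I would reduce to the countably generated case. Since $\TTT$ is separable, the metric space $(\TTT/\circeq, d_\mu)$ is separable, and $\AAA/\circeq$ is a subspace of it, hence itself separable; so there is a countable family $\{A_n\}_{n\in\NN}\subseteq\AAA$ which is $d_\mu$-dense in $\AAA$. Density then forces $\AAA$ and $\sigma(\{A_n\})$ to have the same completion (this is essentially Exercise~\ref{ex:sep2}), so I may replace $\AAA$ by $\sigma(\{A_n\})$. Now set $\xi_n = \bigvee_{i=1}^n \{A_i, X\setminus A_i\}$ and $\xi \coloneqq \bigvee_{n\geq 1}\xi_n$, so that the elements of $\xi$ are the funnels $C_w = \bigcap_{n\geq1} A_n^{w_n}$ indexed by $w\in\Sigma_2^+$, where $A_n^0 = A_n$ and $A_n^1 = X\setminus A_n$. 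Each $C_w$ is a countable intersection of measurable sets, so $\xi$ is a genuine partition of $X$ into measurable sets; equivalently $\xi$ is the partition into level sets of the measurable map $\pi = (\mathbf 1_{A_n})_{n}\colon X\to\Sigma_2^+$ sending each point to the funnel containing it.

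The inclusion $\AAA\subseteq\BBB(\xi)$ is the easy half. Each funnel $C_w$ lies entirely in $A_n$ or entirely in $X\setminus A_n$ according to the value of $w_n$, so $A_n$ is a union of elements of $\xi$, i.e.\ $A_n\in\BBB(\xi)$. As $\BBB(\xi)$ is a $\sigma$-algebra, it contains $\sigma(\{A_n\})=\AAA$, and hence $\overline\AAA$.

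The reverse inclusion $\BBB(\xi)\subseteq\AAA$ is where the real work lies, and I expect it to be the main obstacle. Any $B\in\BBB(\xi)$ is a measurable union of funnels, so $B = \pi^{-1}(S)$ for $S = \pi(B)\subseteq\Sigma_2^+$; writing $\nu = \pi_*\mu$ for the push-forward measure and noting $\sigma(\{A_n\}) = \pi^{-1}(\text{Borel})$, one has $\AAA = \pi^{-1}(\overline{\text{Borel}}^{\,\nu})$ mod zero. Thus it suffices to show that $S$ is $\nu$-measurable: then choosing Borel $S'$ with $\nu(S\bigtriangleup S')=0$ gives $A \coloneqq \pi^{-1}(S')\in\AAA$ with $\mu(A\bigtriangleup B)=\mu(\pi^{-1}(S\bigtriangleup S'))=0$. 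The crux is therefore a \emph{transfer of measurability}: why should $\mu$-measurability of $\pi^{-1}(S)$ force $\nu$-measurability of $S$? This is precisely where structure beyond separability must enter, since the purely Hilbert-space reformulation (that $\mathbf 1_B\in L^2(\AAA)$) is logically equivalent to what we are proving and so cannot be extracted from projection or martingale formalities alone. I would resolve it using the Lebesgue-space structure supplied by Exercises~\ref{ex:uncountablesum}--\ref{ex:sepcompmet}: on the (at most countably many) positive-measure funnels the claim is immediate, since each such funnel already belongs to $\AAA$; on the union of null funnels the factor $(\Sigma_2^+,\nu)$ is non-atomic, and modelling it by the interval via Theorem~\ref{thm:Lebesgue2} turns $\pi$ into a genuine morphism of measure spaces along which inner and outer Borel approximations of $S$ can be pulled back, yielding the desired measurability of $S$.
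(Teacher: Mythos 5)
Your construction is exactly the paper's: the paper likewise notes that separability of $(\TTT,d_\mu)$ passes to the subspace $(\AAA,d_\mu)$, extracts a basis $\{\xi_n\}$ for $\AAA$ from a countable dense family, and sets $\xi=\bigvee_{n\geq 1}\xi_n$ -- your funnel partition (the paper also disposes of atoms first, which you handle instead by observing that every funnel, being a countable intersection of the sets $A_n^{w_n}$, already lies in $\AAA$; that works). The genuine difference is that the paper stops there and explicitly leaves the verification $\AAA\circeq\BBB(\xi)$ as an exercise, whereas you attempt it -- and your diagnosis that the hard inclusion $\BBB(\xi)\subseteq\overline{\AAA}$ needs ``structure beyond separability'' is correct and worth making explicit. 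Indeed the exercise fails for merely separable spaces: take $X\subset[0,1]^2$ a graph of full outer measure with the induced measure, and let $\AAA$ be generated by the traces of dyadic vertical strips; then the funnels are single points, so $\BBB(\xi)$ is the whole (complete) trace $\sigma$-algebra, while a horizontal half-square trace sits at $d_\mu$-distance $\frac12$ from every set in $\AAA$ and so is not in its completion. Thus the Lebesgue hypothesis (in force in the paper from the end of \S\ref{SLebesguespaces}) is genuinely used, precisely where you said it must be.

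One mechanical correction to your final step: you cannot ``pull back inner and outer Borel approximations of $S$,'' because the existence of such approximations \emph{is} the $\nu$-measurability you are trying to prove. The transfer runs the other way. Realize $(X,\TTT,\mu)$ mod zero as the unit interval via Theorem~\ref{thm:Lebesgue2}, so that after discarding a null set $\pi$ becomes a Borel map on a full-measure Borel set; choose Borel sets $B_1\subseteq B$ and $B_2\subseteq X\setminus B$ with $\mu(B\setminus B_1)=\mu((X\setminus B)\setminus B_2)=0$. Then $\pi(B_1)\subseteq S$ and, by saturation of $B$, $\pi(B_2)\subseteq \Sigma_2^+\setminus S$; both images are analytic, hence $\nu$-measurable, and since $\pi^{-1}(\pi(B_i))\supseteq B_i$ we get $\nu_*(S)\geq\mu(B)$ and $\nu_*(\Sigma_2^+\setminus S)\geq\mu(X\setminus B)$, which sum to $1$ and force $\nu_*(S)=\nu^*(S)$. (Alternatively one can avoid analytic sets by using the compactness supplied by a complete basis.) With that repair your argument is a complete solution of the paper's deferred exercise, which is more than the paper's proof itself provides.
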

\begin{proof}
Without loss of generality, we may assume that $\mu$ is non-atomic (if $\AAA$ contains any atoms, these can be taken as elements of $\xi$, and there can only be countably many disjoint atoms).  Since $(X,\TTT,\mu)$ is separable, so is $(X,\AAA,\mu)$.  (The metric space $(\AAA,d_\mu)$ is a subspace of $(\TTT,d_\mu)$.)  In particular, we may take a basis $\{\xi_n\}_{n\in\NN}$ for $\AAA$ and define $\xi$ by~\eqref{eqn:xifromxin}.  It remains only to show that $\AAA = \BBB(\xi)$, which we leave as an exercise.
\end{proof}

\begin{exercise}
Complete the proof of Theorem~\ref{thm:algtopart}.\foot{Restored the hypothesis of separability to the theorem and added this exercise.  Sufficient?  Or should we include the solution of the exercise in the proof?}
\end{exercise}


We will denote the partition constructed in Theorem~\ref{thm:algtopart} by $\Xi(\AAA)$.  Analogously to the proofs of Theorems~\ref{thm:Lebesgue} and~\ref{thm:Lebesgue2}, the elements of $\Xi(\AAA)$ may be described explicitly as follows:  without loss of generality, assume that $A_n^0$ and $A_n^1 = X \setminus A_n^0$ are such that each $\xi_n$ has the form $\bigvee_{k=1}^n \{A_k^0, A_k^1\}$, and given $w\in \{0,1\}^\NN$, let $A_w = \bigcap_{n\in \NN} A_n^{w_n}$.  Note that unlike in those proofs, the intersection may contain more than one point -- indeed, some intersections $A_w$ \emph{must} contain more than one point unless $\Xi(\AAA)=\eps$.


\subsection{Measurable partitions}

We now have a natural way to go from a partition $\xi$ to a $\sigma$-algebra $\BBB(\xi) \subset \TTT$, and from a $\sigma$-algebra $\AAA \subset \TTT$ to a partition $\Xi(\AAA)$.

The definition of $\Xi(\cdot)$ in Theorem~\ref{thm:algtopart} guarantees that it is a one-sided inverse to $\BBB(\cdot)$, in the sense that $\BBB(\Xi(\AAA)) = \AAA$ for any $\sigma$-algebra $\AAA$ (up to equivalence mod zero).  So we may ask if the same holds for partitions; is it true that $\xi$ and $\Xi(\BBB(\xi))$
are equivalent in some sense?

We see that since each set in $\Xi(\BBB(\xi))$ is measurable, we should at least demand that $\xi$ not contain any non-measurable sets.  For example, consider the partition $\xi=\{A,B\}$, where $A\cap B=\emptyset$, $A\cup B=X$: then if $A$ is measurable (and hence $B$ as well), we have
\[
\BBB(\xi) = \{\emptyset, A, B, X\}
\]
and $\Xi(\BBB(\xi)) = \{A,B\}$, while if $A$ is non-measurable, we have
\[
\BBB(\xi) = \{\emptyset, X\}
\]
and so $\Xi(\BBB(\xi)) = \nu$.  Thus a ``good'' partition should only contain measurable sets; it turns out, however, that this is not sufficient, and that there are examples where $\Xi(\BBB(\xi))$ is not equivalent mod zero to $\xi$, even though every set in $\xi$ is measurable.

\begin{example}\label{eg:nonmeas}
Consider the torus $\TT^2$ with Lebesgue measure $\lambda$, and let $\xi$ be the partition into orbits of a linear flow $\phi_t$ with irrational slope $\alpha$; that is, $\phi_t(x,y) = (x+t, y+t\alpha)$.  In order to determine $\BBB(\xi)$, we must determine which measurable sets are unions of orbits of $\phi_t$; that is, which measurable sets are invariant.  Because this flow is ergodic with respect to $\lambda$, any such set must have measure $0$ or $1$, and so up to sets of measure zero, $\BBB(\xi)$ is the trivial $\sigma$-algebra!  It follows that $\Xi(\BBB(\xi))$ is the trivial partition $\nu = \{ \TT^2 \}$.

A discrete-time version of this is the partition of the circle into orbits of an irrational rotation.
\end{example}

\begin{definition}\label{def:meas-hull}
The partition $\Xi(\BBB(\xi))$ is known as the \emph{measurable hull} of $\xi$, and will be denoted by $\HHH(\xi)$.  If $\xi$ is equivalent mod zero to its measurable hull, we say that it is a \emph{measurable partition}.

In particular (foreshadowing the next section), if we denote the partition into orbits of some dynamical system by $\OOO$, then $\HHH(\OOO)$ is also known as the \emph{ergodic decomposition} of that system, and is denoted by $\EEE$.\footnote{It should be noted that because we have not yet talked about conditional measures, one may rightly ask just what about this decomposition is ergodic.}
\end{definition}

It is obvious that in general, the measurable hull of $\xi$ is a coarsening of $\xi$; the definition says that if $\xi$ is non-measurable, this is a proper coarsening.\footnote{Compare this with the action of the Legendre transform on functions -- taking the double Legendre transform of any function returns its \emph{convex hull}, which lies on or below the original function, with equality if and only if the original function was convex.}

\begin{exercise}\label{ex:hull}
Show that the measurable hull $\HHH(\xi)$ is the finest measurable partition which coarsens $\xi$---in particular, if $\eta$ is any partition with
\[
\xi \leq \eta < \HHH(\xi),
\]
then $\HHH(\eta) = \HHH(\xi)$, and hence $\eta$ is non-measurable.
\end{exercise}

$\BBB$ gives a map from the class of all partitions to the class of all $\sigma$-algebras, and $\Xi$ gives a map in the opposite direction, which is the one-sided inverse of $\BBB$.  We see that the set of \emph{measurable} partitions is just the image of the map $\Xi$, on which $\HHH$ acts as the identity, and on which $\BBB$ and $\Xi$ are two-sided inverses.

Thus we have a correspondence between measurable partitions and $\sigma$-algebras -- one may easily verify that the operations $\vee$ and $\wedge$ on measurable partitions correspond directly to the operations $\cup$ and $\cap$ on $\sigma$-algebras, and that the relations $\leq$ and $\geq$ correspond directly to the relations $\subset$ and $\supset$.

Example~\ref{eg:nonmeas} shows that the orbit partition for an irrational toral flow is non-measurable; in fact, this is true for \emph{any} ergodic system with more than one orbit, since in this case $\BBB(\OOO)$ is the trivial $\sigma$-algebra, whence $\EEE = \{X\}$ is the trivial partition and $\OOO \neq \EEE = \HHH(\OOO)$.  This sort of phenomenon is widespread in dynamical systems -- for example, we will see in \S3 that in the context of smooth dynamics, the partition into unstable manifolds is non-measurable whenever entropy is positive.


An alternate characterisation of measurability may be motivated by recalling that in the ``toy'' example of a partition into two subsets, the corresponding $\sigma$-algebra had four elements in the measurable case, and only two in the non-measurable case.  In some sense, measurability of the partition corresponds to increased ``richness'' in the associated $\sigma$-algebra.  This is made precise as follows:\footnote{In \cite[p.\ 4]{vR67}, the property described in Theorem~\ref{thm:sigmeas} is given as the definition of measurable.  The result here shows that the two definitions are equivalent.}

\begin{theorem}\label{thm:sigmeas}
Let $\xi$ be a partition of a Lebesgue space $(X,\TTT,\mu)$.  $\xi$ is measurable if and only if there exists a countable set $\{A_n\}_{n\in\NN} \subset \BBB(\xi)$ such that for almost every pair $C_1, C_2 \in \xi$, we can find some $A_n$ which separates them in the sense that $C_1 \subset A_n$, $C_2 \subset X \setminus A_n$.
\end{theorem}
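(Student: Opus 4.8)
The plan is to reduce the separation condition to the single equality $\xi = \HHH(\xi)$ (mod zero) that defines measurability, by recognising the separating family as, essentially, a generating family for $\BBB(\xi)$ whose atoms realise the measurable hull.

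The first step, on which everything rests, is the remark that every $A \in \BBB(\xi)$ is \emph{saturated}: being a union of elements of $\xi$, it contains each $C \in \xi$ either entirely or not at all. Hence for a countable family $\{A_n\} \subseteq \BBB(\xi)$ I would form the coarsening $\eta \coloneqq \bigvee_n \{A_n, X \setminus A_n\}$, whose nonempty elements are the atoms $A_w = \bigcap_n A_n^{w_n}$. Three facts are then immediate: $\eta$ coarsens $\xi$ (so $\eta \le \xi$); a set $A_n$ separates $C_1, C_2 \in \xi$ precisely when $C_1, C_2$ lie in different atoms $A_w$, so ``$\{A_n\}$ separates $C_1$ and $C_2$'' means exactly ``$C_1, C_2$ lie in distinct elements of $\eta$''; and $\eta = \Xi(\sigma(\{A_n\}))$ is a \emph{measurable} partition, since it lies in the image of $\Xi$ (Theorem~\ref{thm:algtopart}).

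For the direction assuming the family exists, I would read ``almost every pair is separated'' as the existence of a full-measure set $X'$ on which the family genuinely separates points lying in distinct elements of $\xi$. On $X'$ the partitions $\xi$ and $\eta$ then coincide---two points of $X'$ share an $\eta$-element iff no $A_n$ separates them iff they share a $\xi$-element---so $\xi = \eta$ mod zero and $\xi$ inherits measurability from $\eta$. For the converse, assuming $\xi$ measurable means $\xi = \HHH(\xi) = \Xi(\BBB(\xi))$ mod zero; since a Lebesgue space is separable, $\BBB(\xi)$ is separable, and I would invoke the construction behind Theorem~\ref{thm:algtopart} to pick a countable $\{A_n\} \subseteq \BBB(\xi)$ generating $\BBB(\xi)$ mod zero whose atoms $A_w$ are exactly the elements of $\HHH(\xi)$. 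Because $\xi = \HHH(\xi)$ off a null set, distinct elements of $\xi$ are then distinct atoms, and hence separated by some $A_n$, giving the required family.

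The hard part will not be the algebra of partitions but fixing the correct meaning of ``almost every pair.'' The tempting product-measure reading on $X \times X$ is a trap: whenever the elements of $\xi$ are null, that condition becomes vacuous and would absurdly declare every such $\xi$ measurable. The honest formulation is the single full-measure set $X'$ above---equivalently, that the nonempty atoms $A_w$, restricted to $X'$, are exactly the elements of $\xi$---and once this is adopted the remainder is bookkeeping of mod-zero equivalences together with the explicit description of $\Xi$ recorded just after Theorem~\ref{thm:algtopart}.
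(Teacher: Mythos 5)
Your proof is correct and is essentially the paper's own argument: the paper's sketch rests on precisely your key observation, namely that the family $\{A_n\}$ corresponds to the refining sequence of measurable partitions $\xi_n = \bigvee_{k=1}^n \{A_k, X\setminus A_k\}$ whose limit is $\Xi(\sigma(\{A_n\}))$, and your converse via separability of $\BBB(\xi)$ and the explicit description of $\Xi$ is the intended completion of the accompanying exercise. Your resolution of ``almost every pair'' by a single full-measure set $X'$ is likewise the paper's own second parsing (the ``trimmed-down'' partition $\xi|_{X\setminus E}$), so the proposal matches the paper's approach in both substance and interpretation.
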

\begin{proof}[Sketch of proof]
The key observation is the fact that such a set $\{A_n\}_{n\in\NN}$ corresponds to a refining sequence of
partitions~\eqref{eqn:parts} defined by
\begin{align*}
\eta_k &= \{ A_k, X\setminus A_k \}, \qquad\quad
\xi_n = \bigvee_{k=1}^n \eta_k. \qedhere
\end{align*}
\end{proof}

\begin{exercise}
Complete the proof of Theorem~\ref{thm:sigmeas}.
\end{exercise}

It may not immediately be clear what is meant by ``almost every pair'' in the statement of Theorem~\ref{thm:sigmeas}.  Recall that the natural projection $\pi\colon X \to \xi$ takes $x \in X$ to the unique partition element $C\in \xi$ containing $x$.\footnote{A word on notation.  There is a natural correspondence between partitions and equivalence relations; if we use $\xi$ to denote the partition, then $\pi$ takes values in $\xi$, whereas if we use $\xi$ to denote the equivalence relation, then $\pi$ takes values in the quotient space $X/\xi$.}  Thus $\xi$, which may be thought of as the space of equivalence classes, carries a measure $\mu_\xi$ which is the pushforward of $\mu$ under $\pi$ -- given a measurable set $E \subset \xi$, we have
\[
\mu_\xi(E) = \mu(\pi^{-1}(E)).
\]
This gives a meaning to the notion of ``almost every'' partition element, and hence to ``almost every pair'' of partition elements. Another way to parse the statement is to see that we may remove some set $E$ of zero measure from $X$ and pass to the ``trimmed-down'' partition $\xi|_{X\setminus E}$, for which the statement holds for every $C_1, C_2$.

Aside from finite or countable partitions into measurable sets (which are obviously measurable), a good example of a measurable partition is given by Example~\ref{eg:lines}, in which the square $[0,1]\times[0,1]$ with Lebesgue measure $\lambda$ is partitioned into vertical lines.  In fact, this is in some sense the \emph{only} measurable partition, just as $[0,1]$ is, up to isomorphism, the only Lebesgue space -- the following result states that a measurable partition can be decomposed into a ``discrete'' part, where each element has positive measure, and a ``continuous'' part, which is isomorphic to the partition of the square into lines.

\begin{theorem}\label{thm:measpart}
Given a measurable partition $\xi$ of a Lebesgue space $(X,\TTT,\mu)$, there exists a set $E \subset X$ such that
\begin{enumerate}
\item Each element of $\xi|_E$ has positive measure (and hence there are at most countably many such elements).
\item $\xi|_{X\setminus E}$ is isomorphic to the partition of the unit square with Lebesgue measure into vertical lines given in Example~\ref{eg:lines}.
\end{enumerate}
\end{theorem}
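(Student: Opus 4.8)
The plan is to pass to the quotient space of partition elements, classify it with the results already in hand, and then reconstruct the fibered structure over it. Write $\pi\colon X\to\xi$ for the natural projection and $\mu_\xi=\pi_*\mu$ for the quotient measure introduced after Theorem~\ref{thm:sigmeas}. The first step is to check that $(\xi,\mu_\xi)$ is itself a Lebesgue space. Since $\xi$ is measurable, Theorem~\ref{thm:sigmeas} furnishes a countable family $\{A_n\}\subset\BBB(\xi)$ separating almost every pair of elements of $\xi$. Each $A_n$, being a union of elements of $\xi$, descends to a subset $\bar A_n$ of the quotient; these generate the quotient $\sigma$-algebra and make $(\xi,\mu_\xi)$ separable. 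The induced partitions $\bar\xi_n=\bigvee_{k\le n}\{\bar A_k,\ \xi\setminus\bar A_k\}$ then form a basis of the quotient whose funnels are the images of the funnels $A_{w_1}\supset A_{w_1w_2}\supset\cdots$ of $X$; because the $A_n$ separate almost every pair of partition elements, almost every such funnel meets the quotient in exactly one point, so the basis is complete. Hence $(\xi,\mu_\xi)$ is a Lebesgue space.

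Next I would isolate the discrete part. By Theorem~\ref{thm:Lebesgue2}, $(\xi,\mu_\xi)$ is isomorphic to the unit interval together with at most countably many atoms, and an atom of $\mu_\xi$ is precisely a partition element $C$ with $\mu(C)=\mu_\xi(\{C\})>0$. Let $E$ be the union of all such elements; disjointness forces there to be at most countably many of them in a probability space, and each has positive measure, which is exactly statement~(1). On $X'=X\setminus E$ the induced partition $\xi'=\xi|_{X'}$ has non-atomic quotient, so Theorem~\ref{thm:Lebesgue2} gives an isomorphism of the base $(\xi',\mu_{\xi'})$ with $([0,1],\lambda)$. Composing the quotient map with this isomorphism produces a measurable $\pi_1\colon X'\to[0,1]$, constant on the elements of $\xi'$ and pushing $\mu$ forward to Lebesgue measure; this is the candidate for the horizontal coordinate of the square in Example~\ref{eg:lines}.

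It remains to manufacture a transverse coordinate $\pi_2\colon X'\to[0,1]$ so that $(\pi_1,\pi_2)$ is an isomorphism onto $([0,1]^2,\lambda)$ carrying each element of $\xi'$ to a vertical line $\{x\}\times[0,1]$. Here I would fix a basis $\{\alpha_n\}$ of $\BBB(\xi')$ and complete it to a basis $\{\alpha_n\}\cup\{\beta_n\}$ of $X'$, so that the $\alpha$-coordinates recover $\pi_1$ (they determine the fiber) while the $\beta$-sets cut across the fibers. Along almost every funnel the $\beta$-sets single out a point within a fiber, so that almost every fiber, equipped with the traces of the $\beta_n$, is itself a Lebesgue space, hence isomorphic to $[0,1]$ by Theorem~\ref{thm:Lebesgue2}. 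The normalized combinatorial weights $\mu(\,\cdot\cap A_w)/\mu(A_w)$ taken along the funnels $A_w$ of this basis converge to a measure on each fiber, and re-parametrizing each fiber so as to carry that limiting measure to Lebesgue measure on $[0,1]$ produces the desired $\pi_2$.

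The main obstacle is precisely this last assembly: one must show that the fiberwise parametrizations can be chosen to depend measurably on the base point $\pi_1$, and that the limiting fiber measures are non-atomic for almost every fiber (so that the fibers really are ``lines'' rather than, degenerately, points). This is nothing other than the existence of a measurable system of conditional measures --- Rokhlin's disintegration --- for the partition $\xi'$, and its proof rests on the martingale-type convergence of the combinatorial densities above along the funnels of a complete basis, using once more that $X'$ is a Lebesgue space. Granting the non-atomic disintegration, the map $(\pi_1,\pi_2)$ is measure-preserving on the generating algebra and therefore, by the isometry-and-extension argument of Theorem~\ref{thm:Lebesgue}, an isomorphism of measure spaces taking $\xi'$ to the partition into vertical lines, which is statement~(2).
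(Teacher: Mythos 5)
Your proposal follows essentially the same route as the paper's proof: after splitting off the positive-measure elements into $E$, you build the horizontal coordinate from a countable separating family in $\BBB(\xi)$ exactly as the paper's map $\pi$ (your abstract isomorphism of the quotient with $[0,1]$ is the paper's explicit sum $\sum_{v\prec w}\mu(A_v)$), and the vertical coordinate by reparametrizing each fiber by the limit of the conditional densities $\mu(B_w\cap A_{x_1\cdots x_m})/\mu(A_{x_1\cdots x_m})$ along funnels, which is precisely the paper's $\phi(x,y)=\sum_{\{n\mid y_n=1\}}\ph^{y_1\cdots y_{n-1}0}(x)$ built from a generating family $B_n$. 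The ``main obstacle'' you isolate --- measurable, martingale-type convergence of these combinatorial densities --- is exactly what the paper addresses in Lemma~\ref{lem:phconv}, so the two arguments coincide in substance and even in their level of detail at that step.
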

\begin{proof}
We give a complete proof modulo a technical lemma (Lemma \ref{lem:phconv}), whose proof we only sketch.  Let $E$ be the union of the elements of $\xi$ that have positive measure.  To prove the theorem it suffices to restrict our attention to $X\setminus E$, and so from now on we assume that $E$ is empty and all elements of $\xi$ have measure $0$.

The proof is a more sophisticated version of the argument in the proofs of Theorems~\ref{thm:Lebesgue} and~\ref{thm:Lebesgue2}.  Let $A_n\in \BBB(\xi)$ be as in Theorem~\ref{thm:sigmeas}, and as in the proof of Theorem~\ref{thm:Lebesgue}, write $A_{x_1\cdots x_n} = A_1^{x_1} \cap \cdots \cap A_n^{x_n}$, where $A_n^0 = A_n$ and $A_n^1 = X\setminus A_n$.  The idea is that mimicking the proof of Theorem~\ref{thm:Lebesgue2}, we will construct an isomorphism $\rho$ that sends $A_w$ to the vertical strip in $[0,1]\times [0,1]$ whose horizontal footprint is the interval with length $\mu(A_w)$ and left endpoint at $\sum_{v\prec w} \mu(A_v)$.  What remains is to describe the vertical coordinate of the isomorphism.

Before doing this, first observe that the previous paragraph defines a map $\pi\colon X\to [0,1]$ such that if $x\in \{0,1\}^\NN$ and $C\in \xi$ are such that $C = \bigcap_{n\geq 1} A_{x_1\cdots x_n}$, then $\pi(z) = \sum_{x_n=1} \mu(A_{x_1\cdots x_{n-1} 0})$ for every $z\in C$.  Since almost every $C$ admits such an $x$, we see that $\xi$ is equivalent mod zero to
\begin{equation}\label{eqn:preimages}
\pi^{-1}(\eps_{[0,1]}) = \{ \pi^{-1}(a) \mid a\in [0,1] \},
\end{equation}
the \emph{partition into preimages} for the map $\pi$.

So far we have associated to almost every point $z\in X$ a sequence $x = x(z) \in \{0,1\}^\NN$ that determines in which element of $\xi$ the point $z$ lies.  Now fix another sequence of sets $B_n\in \TTT$, this time requiring that they generate the entire $\sigma$-algebra: $\bigvee_{n\geq 1} \{B_n^0, B_n^1\} = \eps$, the partition into points, where $B_n^0 = B_n$ and $B_n^1 = X\setminus B_n$.  As before, given $w\in \{0,1\}^n$, let $B_w = \bigcap_{1\leq k\leq n} B_k^{w_k}$.  Thus to almost every $z\in X$ we can also associate $y=y(z)\in \{0,1\}^\NN$ with the property that $z\in \bigcap_{n\geq 1} B_{y_1\cdots y_n}$.

The map $\pi$ only depends on $x(z)$, and so abusing notation slightly, we will find a function $\phi(x,y)$ such that the map $\rho\colon X\to [0,1]\times [0,1]$ given by $\rho(z) = (\pi(x), \phi(x,y))$ is the desired isomorphism.  Just as $\pi(x)$ is a sum of measures of partition elements, so too $\phi(x,y)$ will be a sum of \emph{conditional} measures of partition elements.

Given a word $w\in \{0,1\}^n$, we define functions $\ph_m^w\colon \{0,1\}^\NN \to [0,1]$ by
\[
\ph_m^w(x) = \frac{\mu(B_w \cap A_{x_1\cdots x_m})}{\mu(A_{x_1\cdots x_m})}.
\]
Thus $\ph_m^w(x)$ is the conditional measure of $B_w$ within the partition element $A_{x_1\cdots x_m}$.  Figure~\ref{fig:lebesgue2} illustrates the procedure for varying $m$ and fixed $w$; observe that the total shaded area under the function remains constant within each $\rho(A_v)$ as $m$ increases.

\begin{figure}[tbp]
	\includegraphics{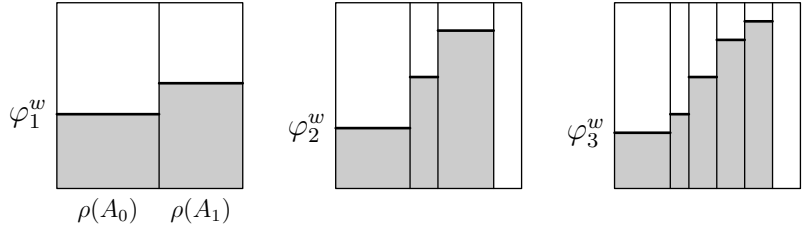}
	\caption{Obtaining $\ph^w$ as a limit of $\ph_m^w$.}
	\label{fig:lebesgue2}
\end{figure}

\begin{lemma}\label{lem:phconv}
There exists a measurable function $\ph^w\colon \{0,1\}^\NN\to [0,1]$ such that $\ph_m^w\to \ph^w$ almost everywhere.
\end{lemma}
\begin{proof}[Sketch of proof]
Full details are in~\cite[Lemma 4]{Via}.  The idea is to show that for every $\alpha<\beta$, the set
\[
S_{\alpha,\beta} = \{x \mid \llim \ph_m^w(x) < \alpha < \beta < \ulim \ph_m^w(x) \}
\]
has zero measure, since the set of points without convergence is a countable union of such sets.  To show this, one observes that for every $x\in S_{\alpha,\beta}$ there exist $c_1 < d_1 < c_2 < d_2 < \cdots$ such that $\ph_{c_i}^w < \alpha < \beta < \ph_{d_i}^w$ for all $i$.  Let
\[
C_i = \bigcup_{x\in S_{\alpha,\beta}} A_{x_1\cdots x_{c_i(x)}}, \qquad
D_i = \bigcup_{x\in S_{\alpha,\beta}} A_{x_1\cdots x_{d_i(x)}},
\]
so that since $S_{\alpha,\beta} \subset C_{i+1} \subset D_i \subset C_i$ for all $i$, we have
\[
\alpha \mu(C_i) > \mu(B_w \cap C_i) > \mu(B_w \cap D_i) > \beta \mu(D_i).
\]
Writing $S' = \bigcap C_i = \bigcap D_i$, we have $\alpha \mu(S') \geq \beta \mu(S')$, so $\mu(S')=0$.
\end{proof}

\begin{figure}[tbp]
	\includegraphics{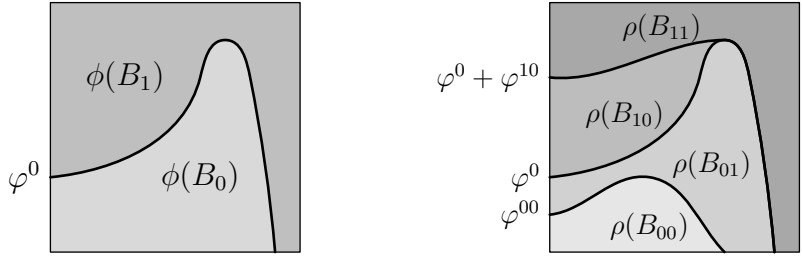}
	\caption{Defining $\rho$ using the functions $\ph^w$.}
	\label{fig:lebesgue3}
\end{figure}

The function $\ph^w(x)$ may be interpreted as the conditional measure of the set $B_w$ in the partition element $\xi(x)$, a point which we elaborate on later.  For the moment we conclude the proof of Theorem~\ref{thm:measpart} by putting
\[
\phi(x,y) = \sum_{\{ n \mid y_n=1\}} \ph^{y_1\cdots y_{n-1} 0}(x),
\]
which takes the place of \eqref{eqn:limsum}.  Now $\rho(z) = (\pi(x(z)), \phi(x(z),y(z)))$ is the desired isomorphism.   Figure~\ref{fig:lebesgue3} illustrates the first two steps in the definition of $\rho$, although we point out that the functions $\ph^w$ need only be measurable, not smooth as in the picture.
\end{proof}


In the course of the previous proof, we showed that a measurable partition $\xi$ can be found as the partition into preimages~\eqref{eqn:preimages} associated to a certain map.  The following result, whose proof (which uses Theorem~\ref{thm:sigmeas}) is left as an exercise, gives conditions under which the converse is true, and thus gives another criterion that can be used to establish measurability of a partition (a further criterion is found in Exercise~\ref{ex:measaslimit}).


\begin{theorem}\label{thm:preimage-partition}
Let $X$ be a complete metric space, $\mu$ a Borel measure on $X$, $Y$ a second countable topological space, and $\pi\colon X\to Y$ a Borel map (that is, preimages of Borel sets 
are Borel). Then the partition into preimages defined by~\eqref{eqn:preimages} is measurable.
\end{theorem}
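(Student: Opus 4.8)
The plan is to reduce the statement to Theorem~\ref{thm:sigmeas}: a partition $\xi$ of a Lebesgue space is measurable as soon as $\BBB(\xi)$ contains a countable family $\{A_n\}$ that separates almost every pair of its elements. Two remarks frame the argument. First, the hypotheses on $X$ and $\mu$ serve precisely to place us in a Lebesgue space---by Exercises~\ref{ex:basis}--\ref{ex:sepcompmet}, a complete metric space carrying a Borel probability measure (restricted, if necessary, to a separable set of full measure) is a Lebesgue space---so that Theorem~\ref{thm:sigmeas} applies. Second, writing $\xi = \pi^{-1}(\eps_Y)$ for the partition~\eqref{eqn:preimages} into preimages, it suffices to exhibit a single countable separating family inside $\BBB(\xi)$.

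First I would use second countability of $Y$ to fix a countable base $\{U_n\}_{n\in\NN}$ of its topology, and set $A_n = \pi^{-1}(U_n)$. Since $\pi$ is Borel and each $U_n$ is open, every $A_n$ is a Borel (hence measurable) subset of $X$. Moreover each $A_n$ is saturated: if $x\in A_n$ and $\pi(x')=\pi(x)$, then $x'\in A_n$, so $A_n$ is a union of elements of $\xi$. Thus $\{A_n\}_{n\in\NN}\subset\BBB(\xi)$, and this is the candidate family.

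It then remains to verify the separation condition. Two distinct elements $\pi^{-1}(a)\neq\pi^{-1}(b)$ of $\xi$ are separated by $A_n$ exactly when the basic open set $U_n$ contains one of $a,b$ but not the other. Because $\{U_n\}$ is a base, it distinguishes any two topologically distinguishable points of $Y$; in particular, when $Y$ is $T_0$---which holds in every intended application, where $Y$ is metrizable---every pair of distinct fibers is separated by some $A_n$, so the hypothesis of Theorem~\ref{thm:sigmeas} is met (in fact for \emph{every} pair, not merely almost every), and we conclude that $\xi$ is measurable.

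The step I expect to carry the real weight is this last one, since it is where the topology of $Y$ genuinely enters, and it is also the only place where a pathology can occur: if $Y$ fails to be $T_0$, some $a\neq b$ may lie in exactly the same basic open sets, so that no pullback $A_n$ separates the corresponding fibers even though those fibers are distinct. In that degenerate situation the family $\{A_n\}$ is too coarse, and one would have to separate such fibers using measurable saturated sets drawn from the Borel structure of $X$ itself (for instance the individual fibers, which are measurable once points of $Y$ are Borel) rather than from pullbacks along $\pi$. Since this complication disappears as soon as $Y$ separates points---the metrizable case relevant here---I would present the $T_0$ argument above as the substance of the proof and confine the general topological caveat to a remark.
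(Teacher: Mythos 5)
Your proof is correct and follows essentially the route the paper intends: the paper leaves the proof as an exercise, indicating only that it uses Theorem~\ref{thm:sigmeas}, and pulling back a countable base of $Y$ to obtain the countable separating family $\{A_n\}\subset\BBB(\xi)$ is precisely the natural implementation of that hint. Your observation that a $T_0$-type hypothesis on $Y$ (automatic in the metrizable applications) is genuinely needed for the separation step is a legitimate refinement of the stated theorem, not a gap in your argument.
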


\begin{example}\label{eg:peacock}
Let $C \subset [0,1]$ be the usual middle-third Cantor set, which has Lebesgue measure $0$ but contains uncountably many points.  Then there is a bijection from $C$ to $[0,1] \setminus C$, and so we may take a partition $\xi$ of $[0,1]$ such that each element of $\xi$ contains exactly two points, one in $C$ and one not in $C$.  Using the characterisation in Theorem~\ref{thm:sigmeas}, we see that $\xi$ is measurable, since we may take for our countable collection the set of intervals with rational endpoints.  Further, this partition is equivalent mod zero to the partition into points.
\end{example}

The situation described in Example~\ref{eg:peacock}, where a partition is in some sense finer than it appears to be, happens all the time in ergodic theory.  A fundamental example is the so-called \emph{Fubini's nightmare}, in which a partition which seems to divide the space into curves in fact admits a set of full
measure intersecting each partition element exactly once, and hence is equivalent mod zero to the partition into points (we will return to this example in~\S\ref{sec:folmeas}).

This sort of behaviour stands in stark contrast to \emph{absolute continuity} -- but in order to make any sense of that notion, we must first discuss conditional measures.

\subsection{Conditional measures on measurable partitions}

If a partition element $C$ carries positive measure (which can only be true of countably many elements), then we can define a conditional measure on $C$ by the obvious method; given $E \subset C$, the conditional measure of $E$ is
\begin{equation}
\mu_C(E) \coloneqq \frac{\mu(E)}{\mu(C)}.
\end{equation}
However, for many partitions arising in the study of dynamical systems, such as the partitions into stable and unstable manifolds which will be discussed later, we would also like to be able to define a conditional measure on partition elements of zero measure, and to do so in a way which allows us to reconstruct the original measure.

The model to keep in mind is the canonical example of a measurable partition, the square partitioned into vertical lines (Example~\ref{eg:lines}). Then denoting by $\lambda$, $\lambda_1$, and $\lambda_2$ the Lebesgue measures on the square, the horizontal unit interval, and vertical intervals, respectively, Fubini's theorem says that for any integrable $f\colon [0,1]^2 \to \RR$ we have
\begin{equation}\label{eqn:Fubini}
\int_{[0,1]^2} f(x,y) \,d\lambda(x,y) = \int_{[0,1]} \int_{[0,1]} f(x,y) \,d\lambda_2(y) \,d\lambda_1(x).
\end{equation}

By Theorem~\ref{thm:measpart}, any measurable partition of a Lebesgue space is isomorphic to the standard example -- perhaps with a few elements of positive measure hanging about, but these will not cause any trouble, as we already know how to define conditional measures on them. Taking the pullback of the Lebesgue measures $\lambda_1$ and $\lambda_2$ under this isomorphism, we obtain a \emph{factor measure} $\mu_\xi$ on $\xi$, which corresponds to the horizontal unit interval (the set of partition elements), and a family of \emph{conditional measures} $\{\mu_C\}_{C\in\xi}$, which correspond to the vertical unit intervals.

Note that the factor measure is exactly the measure on the space of partition elements which was described in the last section.  Note also that although the measure $\lambda_2$ was the same for each vertical line (up to a horizontal translation), we can make no such statement about the measures $\mu_C$, as the geometry is lost in the purely measure theoretic isomorphism between $X$ and $[0,1]^2$.  The key property of these measures is that for any integrable function $f\colon X \to \RR$, the function
\begin{equation}\label{eqn:intpart}
\begin{aligned}
\xi &\to \RR, \\
C &\mapsto \int_C f \,d\mu_C
\end{aligned}
\end{equation}
is measurable, and we have
\begin{equation}\label{eqn:condfubini}
\int_X f\,d\mu = \int_{\xi} \int_C f\,d\mu_C \,d\mu_\xi.
\end{equation}

Each $\mu_C$ is ``supported'' on $C$ in the sense that $\mu_C(C)=1$, but the reader is cautioned that the measure theoretic support of a measure (which is not uniquely defined) is a different beast than the topological support of a measure, and that $\supp \mu_C$ may not be equal to $C \cap \supp \mu$, as the following example shows.

\begin{example}
Let $A\subset [0,1]$ be such that both it and its complement $A^c = [0,1] \setminus A$ intersect every interval in a set of positive
measure.\footnote{Such a set can be constructed, for instance, by repeatedly removing and replacing appropriate Cantor sets of positive measure.}  Let $\lambda_1$ be one-dimensional Lebesgue measure, and define a measure $\mu$ on the unit square by
\[
\mu(E) = \lambda_1(E\cap (A\times\{0\})) + \lambda_1( E\cap
(A^c \times \{1\}))
\]
for each $E\subset [0,1]\times [0,1]$.  Then the topological support of $\mu$ is the union of two horizontal lines, $[0,1]\times\{0,1\}$, and intersects each partition element in two points, but the conditional measures are $\delta$-measures
supported on a single point.
\end{example}

We cannot in general write a simple formula for the conditional measures, as we could in the case where partition elements carried positive weight, so on what grounds do we say that these conditional measures exist?  The justification above relies on the characterisation of measurable partitions given by Theorem~\ref{thm:measpart}.  Related proofs that do not require constructing an isomorphism to the square are presented in Viana's notes~\cite{Via} (which draw on Rokhlin's paper~\cite{vR62}) and in Furstenberg's book~\cite{hF80}.  These use methods from functional analysis, principally the Riesz representation theorem, made available by defining a topology on $X$.

\subsection{Measure classes and absolute continuity}

Let $(X,\TTT)$ be a measurable space, and consider the set $\MMM$ of all measures on $X$.  This set has various internal structures which may be of importance to us; for the time being, we focus our attention on the fact, guaranteed by the Radon--Nikodym Theorem, that measures come in classes.  This theorem addresses the relationship between two measures $\nu$ and $\mu$, and allows us to pass from a qualitative statement to a quantitative one; namely, if $\nu$ is absolutely continuous with respect to $\mu$,\footnote{This means that if $\mu(E)=0$, then $\nu(E)=0$ as well, a state of affairs which is denoted $\nu \ll \mu$.} then there exists a measurable function $d\nu / d\mu$, known as the \emph{Radon--Nikodym derivative}, which has the property that
\[
\nu(E) = \int_E \frac{d\nu}{d\mu}(x) \,d\mu(x)
\]
for any $E \in \TTT$.\footnote{As an aside, note that if we change the $\sigma$-algebra $\TTT$, then we also change the Radon--Nikodym derivative, since $d\nu / d\mu$ must be measurable with respect to $\TTT$.  This fact is crucial to the proof of the Birkhoff Ergodic Theorem in~\cite{KH95}.}

Given a reference measure $\mu$ and any other measure $\nu$, we also have the \emph{Radon--Nikodym decomposition} of $\nu$; that is, we may write $\nu = \nu_1 + \nu_2$, where $\nu_1 \ll \mu$ and $\nu_2 \perp \mu$ (the latter means that there exists $A \in \TTT$ such that $\nu_2(A) = 1$ and $\mu(A) = 0$).

The notion of absolute continuity plays an important role in smooth dynamics, where we have a reference measure class given by the smooth structure of the manifold in question, and are often particularly interested in measures which are absolutely continuous with respect to this measure class.

Given a partition $\xi$, we may also speak of $\nu$ as being absolutely continuous with respect to $\mu$ on the elements of $\xi$ by passing to the conditional measures $\nu_C$ and $\mu_C$ and applying the above definitions.  For example, if we fix $x\in [0,1]$ and write $\delta_x$ for the measure on $[0,1]$ with
\[
\delta_x(E) = \begin{cases}
0 & x\notin E, \\
1 & x\in E,
\end{cases}
\]
and $\lambda$ for Lebesgue measure on $[0,1]$, then the product measure $\delta_x \times \lambda$ fails to be absolutely continuous with respect to Lebesgue measure on $[0,1]^2$, but \emph{is} absolutely continuous on the elements of the partition into vertical lines.  This weaker version of absolute continuity is an important notion in smooth dynamics, where it allows us to ask not just if a measure is absolutely continuous on the manifold as a whole, but if it is absolutely continuous in certain directions, which correspond to the various rates of expansion and contraction given by the Lyapunov exponents.  In particular, we are often interested in measures which are absolutely continuous on unstable leaves, so-called \emph{SRB measures}.

\begin{example}
Let $C\subset [0,1]$ be the usual middle-thirds Cantor set, and let $\mu$ be the probability measure on $C$ that gives weight $2^{-n}$ to each of the basic intervals at the $n$th stage of the construction (which have length $3^{-n}$).  Then the product measure $\mu\times\lambda$ is not absolutely continuous with respect to Lebesgue measure on the square, but is absolutely continuous with respect to the partition into vertical lines.
\end{example}

\section{Measurable Dynamics}

\subsection{Partitions of times past and future}

Now we consider not just a set $X$, but a dynamical system $(X,T)$, where $T\colon X\to X$ is a map whose iterates $T^n$ are of interest.  Generally speaking $X$ carries some structure -- topological, measure-theoretic, metric, manifold -- which is preserved by the action of $T$.  One of the key notions in dynamics is that of \emph{invariance}: the map $T$ sends points to points, sets to sets, measures to measures, functions to functions, and we are interested in properties and characterisations of points, sets, measures, functions which are invariant under the action of $T$.

In this section we will assume that $(X,\TTT,\mu)$ is a measure space as in the previous  section, and that $T\colon X\to X$ is 
a measure-preserving transformation -- that is, that $\mu(E) = \mu(T^{-1}E)$ for all $E\in \TTT$.  We often express the equality $\mu=\mu\circ T^{-1}$ by saying that the measure $\mu$ is invariant under the action of $T$.  When $X$ is a metric space or a manifold and $T$ is a continuous or smooth map, it is often (but not always) the case that there are very many invariant measures.  For the time being, though, we will only consider a single invariant measure.\footnote{Many of the definitions and results here work for any measure-preserving transformation $T$, but some also require $T$ to be invertible with measure-preserving inverse.  In this case we also have $\mu(T(E))= \mu(E)$ for every $E\in \TTT$, which is not necessarily the case for non-invertible transformations.}

We may consider the property of invariance for partitions as well; we say that a partition $\xi$ is invariant if $T^{-1}(C) \in \xi$ for every $C \in \xi$, that is, if the preimage of a  partition element is again a single partition element.\footnote{In~\cite{vR67}, such a partition is said to be \emph{completely invariant}, and \emph{invariant} instead refers to the weaker property that $T^{-1}\xi \leq \xi$, so that the preimage of a partition element is a union of partition elements.}  This is written as $T^{-1}\xi = \xi$, where
\[
T^{-1}\xi = \{\, T^{-1}(C) \mid C \in \xi \,\}.
\]

Given an invariant partition $\xi$, let $\pi$ denote the canonical projection $X \to \xi$, as before.  Then $T$ induces an action $\pi \circ T \circ \pi^{-1}$ on the space of partition elements $X/\xi$, and the dynamics of $T$ may be viewed as a skew product over this action.

In light of the correspondence between measurable partitions and $\sigma$-algebras discussed in the previous section, we may also consider invariant $\sigma$-algebras, those for which $T^{-1}\AAA = \AAA$.  It is then reasonable to ask if there is a natural way to associate to an arbitrary partition or $\sigma$-algebra one which is invariant.  One obvious way is to
take a $\sigma$-algebra $\AAA$, and consider the sub-$\sigma$-algebra $\AAA' \subset \AAA$ which contains all the $T$-invariant sets in $\AAA$.\footnote{Of course, $\AAA$ may not contain \emph{any} non-trivial $T$-invariant sets.} However, there is another important construction, which we now examine.

Now we assume that $T$ is an automorphism, i.e., invertible with measure-preserving inverse.  Let $\xi$ be a finite partition of $X$ into measurable sets, and define
\[
\xi_T \coloneqq \bigvee_{n\in\ZZ} T^n \xi = \lim_{n\to\infty} \bigvee_{j=-n}^n T^j\xi.
\]
The elements of this partition are given by $\bigcap_{n\in\ZZ} T^n C_n$, where $C_n \in \xi$.  Observe that $x\in T^n C_n$ if and only if $T^{-n}(x) \in C_n$, and so knowing which element of $T^n\xi$ the point $x$ lies in corresponds to knowing in which element of $\xi$ the points $T^{-n}(x)$ lies.  This is commonly referred to as the \emph{coding} of the trajectory of $x$: knowing which element of $\xi_T$ the point $x$ lies in is equivalent to knowing the coding of the entire trajectory of $x$, both forward and backward.

Because each of the partitions $\xi^{(n)} \coloneqq \bigvee_{j=-n}^n T^j\xi$ have finitely many elements, all measurable, these partitions themselves are measurable, and we have $\xi^{(n)} = \Xi(\BBB(\xi^{(n)}))$; passing to the limit, we see that $\xi_T = \Xi(\BBB(\xi_T))$, so $\xi_T$ is measurable as well.

\begin{exercise}\label{ex:measaslimit}
Show that a partition $\xi$ is measurable if and only if it is the limit ($\xi = \bigvee_{n=1}^\infty \xi_n$) of an increasing sequence $\{\xi_n\}_{n\in\NN}$ of finite partitions into measurable sets.  Indeed, show that $\xi$ is measurable if the $\xi_n$ are \emph{any} measurable partitions.
\end{exercise}

\begin{example}\foot{Removed incorrect claim from previous exercise and added counterexample here.}
Let $X = \{0,1\}^\NN$ and let $\mu$ be the Bernoulli measure that gives weight $2^{-n}$ to each $n$-cylinder.  Let $\xi_n$ be the partition induced by the equivalence relation $x\sim y$ iff $x_k = y_k$ for all $k\geq n$, and let $\xi = \bigwedge_{n=1}^\infty \xi_n$.

Let $A$ be the set of all $x$ such that the terms $x_j$ are eventually $0$ (that is, there exists $k$ such that $x_j=0$ for all $j\geq k$).  Then $A\in \BBB(\xi_n)$ for all $n$, so $A\in \BBB(\xi)$; indeed, $A$ is an element of $\xi$.  
But $\BBB(\xi)$ is the trivial $\sigma$-algebra since all elements are shift-invariant and $\mu$ is ergodic.  So $\HHH(\xi)$ is the trivial partition, hence $\xi$ is not measurable.  This shows that the counterpart to Exercise~\ref{ex:measaslimit} for \emph{decreasing} sequences of finite measurable partitions is false.
\end{example}

It follows immediately from the construction of $\xi_T$ that it is an invariant partition, whose $\sigma$-algebra is very different from the invariant $\sigma$-algebra described above.

\begin{example}\label{eg:xiT}
Consider the space of doubly infinite sequences on two symbols,
\[
X=\Sigma_2 = \{0,1\}^\ZZ = \{\,(x_n)_{n\in\ZZ} \mid x_n\in\{0,1\}\,\},
\]
and let $T$ be the shift $\sigma\colon (x_n)_{n\in\ZZ} \mapsto (x_{n+1})_{n\in\ZZ}$.  Equip $X$ with the Bernoulli measure $\mu$ which gives each $n$-cylinder weight $2^{-n}$.

Geometrically, $X$ may be thought of as the direct product of two Cantor sets $C$ (each corresponding to the one-sided shift space $\Sigma_2^+$). In this picture, $T$ acts on each copy of $C\times C\subset [0,1]\times [0,1]$ as follows: draw two rectangles of width $1/3$ and height $1$, each of which contains half of the horizontal Cantor set; contract each rectangle in the vertical direction by a factor of $3$; expand it in the horizontal direction by the same factor; and finally, stack the resulting rectangles one on top of the other, as in Figure~\ref{fig:cantor-shift}.

\begin{figure}[tbp]
	\includegraphics{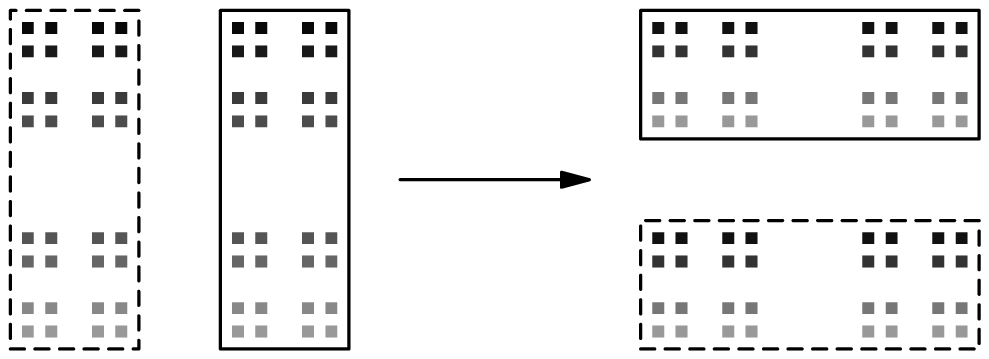}
	\caption{Visualising the action of $\sigma\colon \Sigma_2 \to \Sigma_2$.}
	\label{fig:cantor-shift}
\end{figure}

Now let $\xi$ be the partition of $X$ into one-cylinders; that is, $\xi=\{C_0,C_1\}$, where
\[
C_i \coloneqq \{\, (x_n)_{n\in\ZZ} \in \Sigma_2 \mid x_0=i \,\}.
\]
Each one-cylinder $C_i$ corresponds to one of the two vertical rectangles in the above description, and the reader may verify that in this case, $\xi_T$ is the partition into points.
\end{example}

Another important partition, which is \emph{not} necessarily invariant, is
\[
\xi^- \coloneqq \bigvee_{n=0}^{\infty} T^{-n} \xi.
\]
As discussed above, an element of $\xi_T$ corresponds to trajectories with the same coding for both positive and negative $n$ -- in other words, trajectories with the same past and future relative to the partition $\xi$.  By contrast, $\xi^-$ corresponds to just the infinite future -- points whose forward iterates lie in the same elements of $\xi$ may have backwards iterates lying in different elements of $\xi$.\footnote{In the next section, we will see that for a smooth dynamical system $\xi^-$ can be interpreted as a partition into local stable manifolds.}



This last statement is just another way of saying that $\xi^-$ is not necessarily invariant under the action of $T$.  However, we do have
\[
\xi^- = \xi \vee T^{-1}\xi^- \geq T^{-1}\xi^-;
\]
that is, $\xi^-$ is an \emph{increasing partition}.
\footnote{Note that $\xi^-$ is increasing in the sense that it refines its pre-image; for this to be the case, each individual element must increase in size under $T^{-1}$, and hence \emph{decrease} in size under $T$.  Thus one could also reasonably define increasing partitions as those for which $\xi \leq T^{-1}\xi$, which is
the convention followed in~\cite{LY85a,LY85b}.}


\begin{example}\label{eg:ximinus}
Let $X$, $T$, and $\xi$ be as in Example~\ref{eg:xiT}; then the elements of $\xi^-$ are the sets
\[
C(x) = \{\, y\in\Sigma_2 \mid y_n = x_n\ \forall n\geq 0 \,\},
\]
each of which is a copy of $\Sigma_2^+$, and corresponds in the geometric picture of Figure~\ref{fig:cantor-shift} to a vertical Cantor set
\[
\{t\}\times C \subset C\times C \subset [0,1]\times [0,1],
\]
where $t \in C$ and $C$ is the Cantor set mentioned previously.  Note that each element of the partition $T^{-1}\xi^-$ is a union of two such vertical Cantor sets related by a horizontal translation by $\frac 12$.  Thus we have $T^{-1}\xi^- < \xi^-$.
\end{example}

\begin{example}
Let $X$ be the unit circle and $T$ a rotation by an irrational multiple of $\pi$.  Let $\mu$ be Lebesgue measure and $\xi$ be the partition into two semi-circles.  Then $\xi^-$ and $T^{-1}\xi^-$ are both the partition into points.  In particular, $T^{-1}\xi^- = \xi^-$.
\end{example}

The previous two examples illustrate a dichotomy; either $T\xi^- = \xi^-$, and $\xi^-$ is in fact invariant, or $T\xi^-$ is a proper
refinement of $\xi^-$, which is thus not invariant, as in Example~\ref{eg:ximinus}.  As we will soon see, there are fundamental differences between the two cases.

\subsection{Entropy}

What is the difference between the two cases just discussed, between the case $T\xi^- = \xi^-$ and the case $T\xi^- > \xi^-$?  The key word here is \emph{entropy}; recall that the entropy of a transformation $T$ with respect to a partition $\xi$ is defined as
\begin{equation}\label{eqn:entropy}
h_\mu(T,\xi) \coloneqq \lim_{n\to\infty} \frac{1}{n} H_\mu\left(\bigvee_{k=0}^{n-1} T^{-k} \xi \right),
\end{equation}
where $H_\mu$ is the information content of a (finite or  countable) partition,  given by the following formula (using the convention that $0\log 0 = 0$):
\begin{equation}\label{eqn:information}
H_\mu(\eta) \coloneqq -\sum_{C\in \eta} \mu(C)\log\mu(C).
\end{equation}
This may be interpreted as the expected amount of information that we gain if we know which element of $\eta$ a point $x$ lies in; similarly, the entropy $h_\mu(T,\xi)$ is the average information we gain per iteration of $T$.

\begin{exercise}
Show that if $\xi^-$ is invariant, then $\xi$ has zero entropy, $h_\mu(T,\xi)=0$, whereas if $\xi^-$ is a proper refinement of $T^{-1}\xi^-$, then the partition carries positive entropy, $h_\mu(T,\xi)>0$.
\end{exercise}

The notion of entropy is intimately connected with one more partition canonically associated with $\xi$, defined as
\begin{equation}\label{eqn:Pixi}
\Pi(\xi) \coloneqq \HHH\left(\bigwedge_{n=1}^\infty T^{-n} \xi^-\right).
\end{equation}
Recall that the intersection $\xi\wedge\eta$ of two partitions is the finest partition which coarsens both $\xi$ and $\eta$; if the partitions are measurable, then this corresponds to taking the intersection $\BBB(\xi) \cap \BBB(\eta)$ of the $\sigma$-algebras.

Observe that $T^{-(n+1)}\xi^- \leq T^{-n}\xi^-$ for every $n$, and so
\begin{equation}\label{eqn:PixiN}
\bigwedge_{n=1}^N T^{-n} \xi^- = T^{-N} \xi^- =: \xi_N^-.
\end{equation}
The partition in~\eqref{eqn:PixiN} corresponds to knowing what happens after time $N$ (relative to the partition $\xi$), but having no information on what happens before then.\foot{Removed offending statements and exercises - check to make sure the discussion is correct now.}  


\begin{exercise}\label{ex:Pixi}
Let $X = \{0,1\}^\ZZ\cup\{2,3\}^\ZZ$ and let 
$T$ be the shift $\sigma\colon (x_n)_{n\in\ZZ} \mapsto (x_{n+1})_{n\in\ZZ}$; this is a simple example of a non-transitive subshift of finite type, and comprises two independent copies of the system in Examples~\ref{eg:xiT} and~\ref{eg:ximinus}.  Equip $X$ with the Bernoulli measure $\mu$ which gives each $n$-cylinder weight $(1/4)^n$, and consider the partition $\xi = \{C_0,C_1,C_2,C_3\}$ into $1$-cylinders.

Show that $\Pi(\xi)$ is the partition $\{ C_0 \cup C_1, C_2 \cup C_3 \}$, which separates $X$ into two copies of $\Sigma_2$.  
Generalise this result to an arbitrary non-transitive subshift of finite type.
\end{exercise}


The three partitions we have constructed from $\xi$ are related as follows:
\begin{equation}\label{eqn:xiTxiPixi}
\xi_T \geq \xi^- \geq \Pi(\xi).
\end{equation}
Using the partitions $\xi_N^-$ from~\eqref{eqn:PixiN}, we see that $\xi^- = \xi^-_0$, while $\xi_T$ and $\Pi(\xi)$ may be thought of as the limits of $\xi^-_N$ as $N$ goes to $-\infty$ and $+\infty$, respectively.  

If $h(T,\xi)=0$, then $\xi^- = \xi^-_N$ for all $N$, and all three partitions $\xi_T$, $\xi^-$, and $\Pi(\xi)$ are equal; there is nothing new under the sun, as it were.  In the positive entropy case, each is a proper refinement of the next, as in Examples~\ref{eg:xiT} and~\ref{eg:ximinus}, and Exercise~\ref{ex:Pixi}.  Although all three are measurable, only $\xi_T$ and $\Pi(\xi)$ are always invariant; $\xi^-$ is not invariant except in the zero entropy case.

\begin{exercise}\label{ex:Pixizero}
Show that the partition $\Pi(\xi)$ derived in Exercise~\ref{ex:Pixi} has zero entropy (and thus $\Pi(\Pi(\xi)) = \Pi(\xi)$, so the operator $\Pi$ is idempotent).
\end{exercise}

The result of Exercise~\ref{ex:Pixizero} is actually quite general, and we would like to somehow think of $\Pi(\xi)$ as the ``zero entropy'' coarsening of $\xi^-$.  Notice that $\Pi(\xi)$ may be a continuous partition, whose elements all have zero measure; in this case, the usual definition of entropy makes no sense, and the information function $H$ must be redefined.  We will return to this point in \S\ref{sec:cond_ent}; for now, we simply state that the appropriate meaning of ``zero entropy'' for the continuous partition $\Pi(\xi)$ is that for any finite partition $\eta\le\Pi(\xi)$ we have $h_{\mu}(T,\eta)=0$.

\begin{theorem}\label{thm:zeroentropy}
Let $\eta \leq \Pi(\xi)$ be a finite or countable partition with finite entropy.  Then $h_\mu(T,\eta)=0$.
\end{theorem}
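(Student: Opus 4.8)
The plan is to identify $\Pi(\xi)$ with a tail $\sigma$-algebra and then to show that this $\sigma$-algebra carries no entropy, by means of a cancellation argument. The crude monotonicity estimate will only give $h_\mu(T,\eta)\le h_\mu(T,\xi)$, so the real content is to use the tail structure essentially.

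First I would restate the hypothesis in the language of $\sigma$-algebras. Setting $\AAA_N \coloneqq \BBB(T^{-N}\xi^-) = \bigvee_{k\geq N} T^{-k}\BBB(\xi)$, the identity $\BBB(\HHH(\cdot)) = \BBB(\cdot)$ together with~\eqref{eqn:PixiN} gives
\[
\BBB(\Pi(\xi)) = \BBB\Bigl(\bigwedge_{n\geq 1} T^{-n}\xi^-\Bigr) = \bigcap_{N\geq 1}\AAA_N;
\]
write $\AAA_\infty$ for this tail $\sigma$-algebra. Two observations are immediate: $T^{-1}\AAA_\infty = \AAA_\infty$, so $\AAA_\infty$ is invariant, and $\AAA_\infty \subseteq \AAA_1 = \bigvee_{k\geq 1} T^{-k}\BBB(\xi)$, i.e.\ the tail already sits inside the strict future. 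Since $\eta \le \Pi(\xi)$ means exactly $\BBB(\eta)\subseteq\AAA_\infty$, it suffices to prove that the factor entropy $h_\mu(T,\AAA_\infty)\coloneqq \sup\{h_\mu(T,\zeta): \zeta \text{ of finite entropy},\ \BBB(\zeta)\subseteq\AAA_\infty\}$ vanishes, for then $h_\mu(T,\eta)\le h_\mu(T,\AAA_\infty)=0$.

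The heart of the argument is a cancellation. I would invoke two standard facts about conditional entropy over an invariant sub-$\sigma$-algebra (see~\cite{vR67}): the relative Kolmogorov--Sinai formula $h_\mu(T,\xi\mid\AAA_\infty) = H_\mu\bigl(\xi \mid \bigvee_{k\geq 1} T^{-k}\BBB(\xi)\vee \AAA_\infty\bigr)$, and the Abramov--Rokhlin addition formula $h_\mu(T,\BBB(\xi)\vee\AAA_\infty) = h_\mu(T,\AAA_\infty) + h_\mu(T,\xi\mid\AAA_\infty)$. Because $\AAA_\infty\subseteq\AAA_1 = \bigvee_{k\geq 1} T^{-k}\BBB(\xi)$, the conditioning $\sigma$-algebra in the first formula collapses to $\AAA_1$, whence
\[
h_\mu(T,\xi\mid \AAA_\infty) = H_\mu(\xi\mid \AAA_1) = h_\mu(T,\xi),
\]
the last step being the ordinary Kolmogorov--Sinai formula $h_\mu(T,\xi)=H_\mu\bigl(\xi\mid\bigvee_{k\geq1}T^{-k}\BBB(\xi)\bigr)$. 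On the other hand $\BBB(\xi)\vee\AAA_\infty\subseteq\BBB(\xi^-)$, so monotonicity of entropy together with $h_\mu(T,\xi^-)=h_\mu(T,\xi)$ squeezes $h_\mu(T,\BBB(\xi)\vee\AAA_\infty)=h_\mu(T,\xi)$. Substituting into the addition formula yields $h_\mu(T,\xi)=h_\mu(T,\AAA_\infty)+h_\mu(T,\xi)$, and since $\xi$ is finite we have $h_\mu(T,\xi)\le H_\mu(\xi)<\infty$, so the two copies of $h_\mu(T,\xi)$ cancel and $h_\mu(T,\AAA_\infty)=0$. The extension from finite $\eta$ to countable $\eta$ of finite entropy is automatic, since all of the formulas above remain valid once $H_\mu(\eta)<\infty$.

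The main obstacle is precisely to make the tail hypothesis do real work: measurability of $\eta$ with respect to a \emph{single} $\AAA_N$ only bounds its entropy by $h_\mu(T,\xi)$, whereas membership in $\AAA_\infty = \bigcap_N\AAA_N$ is what forces the conclusion. In the argument above this enters through the collapse $\AAA_\infty\subseteq\AAA_1$, which makes conditioning on the tail redundant relative to the future and thereby equates the relative and absolute entropies of $\xi$; the cancellation then extracts the vanishing of the factor entropy. The only genuinely external inputs are the relative Kolmogorov--Sinai and Abramov--Rokhlin formulas, which I would quote from a standard reference, everything else following from monotonicity and the identities recorded in this section.
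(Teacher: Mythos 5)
Your argument is correct, but note that the paper does not actually prove this theorem at all: its ``proof'' is the single line ``See \cite{vR67}.'' What you have done is essentially reconstruct the classical Pinsker-type argument that lives in that reference. Your route --- identify $\BBB(\Pi(\xi))$ with the tail $\sigma$-algebra $\AAA_\infty=\bigcap_N\bigvee_{k\geq N}T^{-k}\BBB(\xi)$, collapse the conditioning in the relative Kolmogorov--Sinai formula using $\AAA_\infty\subseteq\AAA_1$ to get $h_\mu(T,\xi\mid\AAA_\infty)=h_\mu(T,\xi)$, and cancel in the Abramov--Rokhlin addition formula --- is sound, and the cancellation is legitimate precisely because $\xi$ is finite in this section, so $h_\mu(T,\xi)\leq H_\mu(\xi)<\infty$ (you rightly flag this; for a countable $\xi$ of infinite entropy the cancellation would fail). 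Two small points deserve tightening. First, the identification $\BBB(\Pi(\xi))=\bigcap_N\AAA_N$ uses that $\HHH$ does not change the associated $\sigma$-algebra and that the lattice isomorphism between measurable partitions and complete sub-$\sigma$-algebras converts $\wedge$ into $\cap$; the paper asserts this correspondence, and it is exactly why the hull appears in the definition of $\Pi(\xi)$ (the un-hulled meet need not be measurable, as the paper's Bernoulli example shows), so your reduction is faithful to the framework. Second, your monotonicity step is stated for $\BBB(\xi)\vee\AAA_\infty\subseteq\BBB(\xi^-)$, but $\BBB(\xi^-)$ is not $T$-invariant, and entropy relative to a non-invariant sub-$\sigma$-algebra needs interpretation; the clean statement is that the invariant factor generated by $\BBB(\xi)\vee\AAA_\infty$ sits inside $\BBB(\xi_T)$, whose factor entropy equals $h_\mu(T,\xi)$ by Kolmogorov--Sinai, which gives the same squeeze. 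With those glosses, your proof is complete modulo the two quoted standard facts (relative Kolmogorov--Sinai and Abramov--Rokhlin), both of which are developed in \cite{vR67}; compared with the paper's bare citation, your version makes visible exactly where the tail hypothesis does its work, namely in the collapse $\AAA_1\vee\AAA_\infty=\AAA_1$.
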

\begin{proof} See \cite{vR67}.
\end{proof}

\subsection{The Pinsker partition}

We may consider the set of all partitions with the property exhibited by $\Pi(\xi)$ in Theorem~\ref{thm:zeroentropy}. This set has an supremum in the partially ordered set of all partitions;
that is, there exists a partition $\pi(T)$ which is the finest (biggest) partition such that every finite partition coarser (smaller) than it has zero entropy. This is the \emph{Pinsker partition}, and we may rephrase the above statement as the fact that a finite partition $\eta$ has $h_\mu(T,\eta)=0$ if and only if $\eta \leq \pi(T)$.

Equivalently, $\pi(T)$ may be defined through its $\sigma$-algebra; consider all finite or countable measurable
partitions with zero entropy, and take the union of their associated $\sigma$-algebras.  This union is the \emph{Pinsker $\sigma$-algebra}, whose associated measurable partition is $\pi(T)$.

Even more concretely, we have the following criterion:  a set $E\in\TTT$ is contained in the Pinsker $\sigma$-algebra if and only if the partition $\xi=\{E,X\setminus E\}$ has $h_\mu(T,\xi)=0$.  Analogously, the Pinsker partition is the join of all zero-entropy partitions.

The Pinsker partition may be thought of as the canonically defined zero entropy part of a measure preserving transformation; there are two extreme cases.  On the one hand, we may have $\pi(T)=\eps$, the partition into points, in which case \emph{every} finite partition is a coarsening of $\pi(T)$, and hence has zero entropy. Thus $T$ is a zero entropy transformation, $h_\mu(T)=0$.  At the other extreme, we may have $\pi(T)=\nu$, the trivial partition $\{X\}$, in which case every finite partition has positive entropy, and we say that $T$ is a \emph{K-system}.

Upon factoring by the Pinsker partition, we can view an arbitrary measure-preserving transformation as a skew product over its zero entropy part.

\subsection{Conditional entropy}\label{sec:cond_ent}

At this point we must grapple with the difficulty hinted at before Theorem~\ref{thm:zeroentropy}.  That is, we would like to make sense of the notion of entropy of $T$ relative to a partition for as broad a class of partitions as possible.  The definition~\eqref{eqn:entropy} relies on the formula~\eqref{eqn:information} for the information content of a partition (often referred to simply as the entropy of the partition); as we observed earlier, this only makes sense when $\xi$ is a finite or countable partition whose elements carry positive measure.  For a continuous partition, such as the partitions into stable and unstable manifolds which will appear in the next section, or the partition of the unit square into vertical lines which we have already seen, this definition is useless, since $\mu(C)=0$ for each individual
partition element $C$.

The way around this impasse is to recall the definition of conditional entropy, and adapt it to our present situation by making use of a system of conditional measures, which as we have seen may be defined for a measurable partition even when individual elements have measure zero.

To this end, we first observe that if we define the \emph{information function} $I_\mu^\eta(x) = -\log \mu(\pi_\eta(x))$, where $\pi_\eta$ is the canonical projection taking $x$ to the element of $\eta$ in which it is contained, then the definition~\eqref{eqn:information} of $H_\mu(\eta)$ can be replaced by the following formula:
\[
H_\mu(\eta) = \int_X I_\mu^\eta(x) \,d\mu(x).
\]
That is, the entropy of the partition $\eta$ is the expected value of the information function.  Similarly, given two finite or countable partitions $\xi$ and $\eta$, one definition of the conditional entropy $H_\mu(\xi|\eta)$ is as the expected value of the conditional information function
\begin{equation}\label{eqn:condinfo}
I_\mu^{\xi,\eta} \colon x\mapsto -\log\mu_{\pi_\eta(x)}(\pi_\xi(x)).
\end{equation}

The useful feature of~\eqref{eqn:condinfo} is that it works for \emph{any} measurable partitions $\xi$ and $\eta$, including continuous ones -- all we need is a system of conditional measures.

We could also avoid the explicit use of the information function and consider the usual entropy $H_{\mu_C} (\xi|_C)$ on each partition element $C\in\eta$, then integrate using the factor measure to obtain $H_\mu(\xi | \eta)$.  Provided $\xi|_C$ has elements of positive conditional measure $\mu_C$, the usual entropy will be well defined, and we are in business.

\begin{exercise}\label{ex:condentropy}
Let $\xi$ be a finite or countable partition, so that we may apply the usual definition of entropy, and show that
\[
h_\mu(T,\xi) = H_\mu(\xi^- \mid T^{-1}\xi^-).
\]
Further, show that if $\xi$ is increasing ($\xi \geq T^{-1}\xi$), we have
\begin{equation}\label{eqn:condentropy}
h_\mu(T,\xi) = H_\mu(\xi \mid T^{-1}\xi).
\end{equation}
\end{exercise}

Since the right hand side of~\eqref{eqn:condentropy} is defined for any measurable increasing partition, and is shown by Exercise~\ref{ex:condentropy} to agree with the usual definition of entropy for finite and countable partitions, we may take it as a definition of entropy for an arbitrary measurable increasing partition.

The key fact connecting these considerations to smooth dynamics is the observation that if $h_\mu(T)=0$, then the conditional entropy on each partition element is $0$, which in the context of the next section will imply that conditional measures on stable and unstable leaves must be atomic.

\section{Foliations and Measures}\label{sec:folmeas}

\subsection{Uniform hyperbolicity -- stable and unstable foliations}

Consider now a diffeomorphism $f\colon M\to M$, where $M$ is a Riemannian manifold.  For general background on the theory of smooth dynamical systems, we refer to~\cite{KH95} and~\cite{BP07}; here we will assume that at least the basic definitions are known.

If $\Lambda \subset M$ is a hyperbolic set for $f$, then we are guaranteed the existence of local and global stable and unstable manifolds at each point $x\in\Lambda$.  The local manifolds are characterised as containing all points whose orbit converges to that of $x$ under forward or backward iteration, without ever being too far away:
\[
W_{x,\epsilon}^s = \left\{\, y\in M \Bigm| \lim_{n\to+\infty} d(f^n y,
f^n x) = 0 \text{ and } d(f^n y, f^n x) < \epsilon\ \forall n\geq 0
\,\right\}
\]
and similarly for $W_{x,\epsilon}^u$, with $n\to-\infty$ and $n\leq 0$.  For example, the set $X$ depicted in Figure~\ref{fig:cantor-shift} can be realised as a hyperbolic set for a diffeomorphism; in this case $W_{x,\epsilon}^s$ is contained in the vertical line through $x$, while $W_{x,\epsilon}^u$ is contained in the horizontal line through $x$.

The global manifolds are characterised similarly, without the requirement that the orbits always be close:
\[
W_x^s = \left\{\, y\in M \Bigm| \lim_{n\to+\infty}d(f^n y, f^n x)
= 0 \,\right\}
\]
Again, for $W_x^u$, the limit is taken as $n\to-\infty$.

The local manifolds are embedded images of Euclidean space; the global manifolds, however, are usually only immersed, and have a somewhat strange global topology.\footnote{Hence the terminology ``strange attractor'' which we see in conjuction with various dissipative systems such as the H\'enon map.} For example, they are dense in $\TT^2$ for the Anosov diffeomorphism given by the action of $\left( \begin{smallmatrix} 2 & 1 \\ 1 & 1 \end{smallmatrix} \right)$, and hence cannot be embedded images.

The connection with the previous two sections comes when we observe that given two points $x,y\in M$, either $W_x^s \cap W_y^s = \emptyset$ or $W_x^s = W_y^s$, and similarly for the unstable manifolds.  It follows that the global stable manifolds form a partition of some invariant set $X^- \supset \Lambda$; we denote this partition into global stable manifolds by $\Pi^-$, and its counterpart, the partition (of some set $X^+$) into global unstable manifolds, by $\Pi^+$.

For the linear toral automorphism mentioned above, these partitions are exactly the same as the partition into orbits of the irrational linear flow in Example~\ref{eg:nonmeas}, and we saw there that such partitions are non-measurable.  In fact, such behaviour is quite common.

\begin{theorem}\label{thm:zeromeas}
Given a $C^2$ diffeomorphism $f\colon M\to M$ and a hyperbolic set $\Lambda \subset M$, the following are equivalent:
\begin{enumerate}
\item $h_\mu(f)=0$; 
\item $\Pi^-$ is measurable; 
\item $\Pi^+$ is measurable.
\end{enumerate}
\end{theorem}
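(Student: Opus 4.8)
The plan is to establish the result by connecting the measurability of the partitions $\Pi^\pm$ to the entropy via the conditional entropy machinery developed in Section~2, exploiting the fact that stable manifolds are exactly the kind of ``infinite future'' partition $\xi^-$ studied there. I would prove the cycle $(1)\Rightarrow(2)$, $(2)\Rightarrow(1)$, and then argue symmetry to handle $(3)$. Since the time-reversal $f\mapsto f^{-1}$ interchanges stable and unstable manifolds while preserving the entropy $h_\mu(f)=h_\mu(f^{-1})$, the equivalence $(1)\Leftrightarrow(3)$ follows from $(1)\Leftrightarrow(2)$ applied to $f^{-1}$. So the real content is $(1)\Leftrightarrow(2)$.

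First I would realize $\Pi^-$ in the language of Section~2. The key geometric observation is that for a hyperbolic set, a sufficiently fine finite partition $\xi$ (a Markov partition, or just a partition by small boxes adapted to the local product structure) has the property that $\xi^- = \bigvee_{n\geq 0} f^{-n}\xi$ refines to the partition into local stable manifolds, and the further coarsening $\Pi(\xi) = \HHH(\bigwedge_{n\geq 1} f^{-n}\xi^-)$ agrees, up to sets of measure zero, with the partition $\Pi^-$ into global stable manifolds. Indeed, two points lie in the same global stable manifold precisely when their forward orbits are eventually coded identically by $\xi$, which is exactly what the limit $\bigwedge_n f^{-n}\xi^-$ captures. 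Thus $\Pi^-$ coincides mod zero with the measurable hull $\Pi(\xi)$, and by Theorem~\ref{thm:zeroentropy} and the surrounding discussion, $\Pi(\xi)$ is a ``zero entropy'' partition in the sense made precise there.

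For $(1)\Rightarrow(2)$: if $h_\mu(f)=0$, then by the discussion following Exercise~\ref{ex:condentropy} we have $\xi^- = \xi^-_N$ for all $N$, so the three partitions $\xi_T$, $\xi^-$, $\Pi(\xi)$ all coincide. Since $\xi_T$ is always measurable (as shown in Section~2 via $\xi_T = \Xi(\BBB(\xi_T))$), it follows that $\xi^- = \Pi^-$ is measurable. For the converse $(2)\Rightarrow(1)$, I would argue contrapositively: if $h_\mu(f)>0$, then for a generating partition $\xi$ we have $h_\mu(f,\xi)>0$, which by Exercise~\ref{ex:condentropy} means $H_\mu(\xi^-\mid f^{-1}\xi^-)>0$, so $\xi^-$ is a \emph{proper} refinement of $f^{-1}\xi^-$. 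This forces $\xi^-$ to differ from its coarsening $\Pi(\xi)$, and the heart of the matter is to show that this entropy gap manifests as genuine \emph{non-measurability} of $\Pi^-$ rather than merely a proper refinement at the level of already-measurable partitions. The bridge is the remark at the end of Section~\ref{sec:cond_ent}: positive entropy means the conditional measures on stable leaves are non-atomic, which is incompatible with $\Pi^-$ being measurable in the sense that would make its conditional measures atomic (as they must be if $\Pi^-$ were to satisfy the separation criterion of Theorem~\ref{thm:sigmeas}).

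The main obstacle I anticipate is precisely this last implication $(2)\Rightarrow(1)$, specifically pinning down \emph{why} positive entropy obstructs measurability of the global stable partition. The clean statement to aim for is: $\Pi^-$ is measurable if and only if the conditional entropy of $\xi^-$ given $f^{-1}\xi^-$ vanishes, using the interpretation of entropy as conditional information along the increasing partition $\xi^-$. The non-measurability itself should follow the template of Example~\ref{eg:nonmeas}: just as ergodicity collapses the invariant $\sigma$-algebra and makes the orbit partition non-measurable, here positive entropy makes $\BBB(\Pi^-)$ strictly smaller than one would need, so that the measurable hull $\HHH(\Pi^-)$ is a strict coarsening of $\Pi^-$. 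The $C^2$ hypothesis enters to guarantee absolute continuity of the holonomies and the existence of the well-behaved local product structure needed to identify $\xi^-$ with local stable manifolds; for the full rigorous argument I would cite Theorem~\ref{thm:zeroentropy} together with \cite{LY85a,LY85b}, where the relationship between entropy, dimension, and the structure of conditional measures on stable and unstable leaves is worked out in detail.
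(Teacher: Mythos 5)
Your outline has the right skeleton (time reversal for $(1)\Leftrightarrow(3)$, which is exactly what the paper does, and connecting $\Pi^-$ to the tail partition $\Pi(\xi)$), but it contains one outright false identification and one missing mechanism. The false step is the claim that $\Pi^-$ ``coincides mod zero with the measurable hull $\Pi(\xi)$.'' The set-theoretic meet $\bigwedge_{n}f^{-n}\xi^-$ (the eventual-coding, or tail, partition) is indeed essentially $\Pi^-$, but $\Pi(\xi)$ is \emph{by definition} the measurable hull $\HHH$ of that meet, and the whole content of the theorem is whether applying $\HHH$ changes anything. Since $\Pi(\xi)$ is measurable by construction, your identification would make $\Pi^-$ measurable for \emph{every} invariant measure, contradicting the very statement you are proving in the positive-entropy case. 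The correct statement, which the paper extracts from Theorem B of~\cite{LY85a} via~\eqref{eqn:generatingPinsker}, is $\HHH(\Pi^-)=\Pi(\xi)=\pi(f)$, the Pinsker partition; hence $\Pi^-$ is measurable if and only if it agrees mod zero with $\pi(f)$. Your $(1)\Rightarrow(2)$ direction can be repaired despite this, since in the zero-entropy case one has the squeeze $\xi^-\geq\Pi^-\geq\HHH(\Pi^-)$ with the two ends equal mod zero ($\xi_T=\xi^-=\Pi(\xi)$, and $\xi_T=\eps$ for a generator); the paper instead argues directly that zero entropy forces the conditional measures on stable leaves to be atomic, in fact $\delta$-measures, producing a full-measure set meeting each leaf once (Fubini's nightmare), so that $\Pi^-\circeq\eps=\pi(f)$.

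The genuine gap is $(2)\Rightarrow(1)$. Your bridge---that measurability of $\Pi^-$ would force its conditional measures to be atomic, via Theorem~\ref{thm:sigmeas}---is false as a principle: the square partitioned into vertical lines is measurable with non-atomic (Lebesgue) conditional measures, and Theorem~\ref{thm:sigmeas} says nothing about atomicity. The paper's actual mechanism is different and is the heart of the proof. Measurability of $\Pi^-$ yields a system of conditional \emph{probability} measures on global stable leaves; taking a leaf-subordinated partition $\xi$ (whose construction, together with Lemma~\ref{lem:hisH} giving $h_\mu(f)=H_\mu(f\xi\mid\xi)$, occupies Steps 1--2 of the paper's proof), one considers the conditional information functions $I_n(x)=-\log\mu_{C_{n+1}(x)}(C_n(x))$ along the refining sequence $f^{-n}\xi$, each with $\int_X I_n\,d\mu=h_\mu(f)$. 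Because conditional measures are unique up to normalisation and the systems pull back under $f$, one gets $I_{k+1}=I_k$, so if $h_\mu(f)>0$ then $\sum_{k=0}^{n-1}I_k(x)=-\log\mu_{C_n(x)}(C_0(x))$ diverges on a set of positive measure; since $\mu_{C_n(x)}$ converges to the conditional measure on the global leaf, every element of $\xi$ would have conditional measure zero there---impossible, as countably many such elements cover each leaf of total conditional measure one. Without this divergence argument (or an equivalent), your contrapositive does not close; the analogy with Example~\ref{eg:nonmeas} supplies intuition but no proof. Your use of a finite generator in place of the continuous leaf-subordinated partition is, as the paper itself notes, legitimate in the uniformly hyperbolic case for establishing Lemma~\ref{lem:hisH}, so that simplification is sound.
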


Before outlining the proof of Theorem~\ref{thm:zeromeas}, we briefly describe how one can produce many examples where the equivalent conditions all hold.  If $f$ is a hyperbolic automorphism of the two-dimensional torus and $(X,T,\nu)$ is any zero-entropy ergodic measure-preserving transformation, then it was shown in~\cite{LT77} that there exists an $f$-invariant measure $\mu$ such that the support of $\mu$ is the entire torus and $(M,f,\mu)$ is isomorphic to $(X,T,\nu)$.  For such a measure, all three conditions above hold.

\begin{proof}[Sketch of proof of Theorem~\ref{thm:zeromeas}]
We outline a proof which is due to Sinai in the case of absolutely continuous $\mu$, and in the general case  can be found in~\cite{LY85b}.\footnote{In fact, the argument has been known as ``folklore'' since the 1960's, but probably had not appeared in print before the Ledrappier--Young paper.}

Without loss of generality, assume $\mu$ is ergodic; we will sketch the construction of a \emph{leaf-subordinated partition.}
\begin{definition}[\cite{BP07}, Theorem 9.4.1]
A \emph{leaf-subordinated partition} associated with the global stable manifolds is a measurable partition $\xi$ such that
\begin{enumerate}
\item For $\mu$-a.e.\ $x$, the element of $\xi$ containing $x$ is an
open subset of $W^{s,\epsilon}(x)$ for some $\epsilon>0$ (hence in particular, $\xi\geq \Pi^-$);
\item $f\xi \geq \xi$ ($\xi$ is increasing);
\item $\xi_f = \eps$;
\item $\Pi(\xi) = \HHH(\Pi^-)$.
\end{enumerate}
\end{definition}

Conditions (3) and (4) guarantee that the increasing sequence of partitions $\xi_N^-$ has $\eps$ as one limit and $\HHH(\Pi^-)$ as the other.

Once such a partition is obtained, one proves the following lemma:

\begin{lemma}\label{lem:hisH}
For any leaf-subordinated partition $\xi$ associated with the
global stable manifolds, we have
\[
h_\mu(f) = H_\mu(f\xi | \xi).
\]
\end{lemma}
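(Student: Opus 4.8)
The plan is to reduce the statement to the identity $h_\mu(f) = h_\mu(f,\xi)$ and then prove this by a two-sided estimate. First I would observe that, since $\mu$ is $f$-invariant, conditional entropy is unchanged under applying $f$ to both of its arguments, so $H_\mu(f\xi|\xi) = H_\mu(\xi \mid f^{-1}\xi)$. Because $\xi$ is increasing (condition (2), i.e.\ $\xi \ge f^{-1}\xi$), the right-hand side is exactly $h_\mu(f,\xi)$ in the extended sense discussed after Exercise~\ref{ex:condentropy}. Thus the lemma is equivalent to the assertion that the entropy of $f$ computed against the single measurable, increasing, generating partition $\xi$ agrees with $h_\mu(f)$ -- a generalized Kolmogorov--Sinai statement. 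I would emphasize that only conditions (2) and (3) enter here; conditions (1) and (4), which tie $\xi$ to the stable foliation and to $\HHH(\Pi^-)$, play their role elsewhere in the proof of Theorem~\ref{thm:zeromeas}.

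For the inequality $h_\mu(f,\xi) \le h_\mu(f)$ I would approximate $\xi$ from below by finite partitions $\zeta_k \nearrow \xi$ (available since $\xi$ is measurable, by Exercise~\ref{ex:measaslimit}). The key point is that because $\xi$ is increasing we have $f^{-j}\zeta_k \le f^{-j}\xi \le \xi$ for every $j\ge 0$, so $\xi \ge \bigvee_{j\ge 0} f^{-j}\zeta_k$. Conditioning on the larger partition can only decrease entropy, which together with invariance gives
\[
H_\mu(f\zeta_k \mid \xi) \le H_\mu\Bigl(f\zeta_k \,\Big|\, \bigvee_{j\ge 0} f^{-j}\zeta_k\Bigr) = H_\mu\Bigl(\zeta_k \,\Big|\, \bigvee_{j\ge 1} f^{-j}\zeta_k\Bigr) = h_\mu(f,\zeta_k) \le h_\mu(f).
\]
Letting $\zeta_k \nearrow \xi$ and using continuity of conditional entropy in its first argument (so that $H_\mu(f\zeta_k\mid\xi)\nearrow H_\mu(f\xi\mid\xi)$) then yields $H_\mu(f\xi|\xi) \le h_\mu(f)$.

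The reverse inequality $h_\mu(f) \le h_\mu(f,\xi)$ is where condition (3) -- that $\xi$ generates, $\xi_f = \eps$ -- is used, and this is the step I expect to be the main obstacle. Since $\xi$ is increasing, its forward iterates refine to the point partition, $f^N\xi \nearrow \eps$ as $N\to\infty$. For an arbitrary finite partition $\eta$ I would invoke the standard estimate $h_\mu(f,\eta) \le h_\mu(f,f^N\xi) + H_\mu(\eta \mid f^N\xi)$ together with $h_\mu(f,f^N\xi) = H_\mu(f^{N+1}\xi \mid f^N\xi) = H_\mu(f\xi|\xi) = h_\mu(f,\xi)$ (invariance once more), so that
\[
h_\mu(f,\eta) \le h_\mu(f,\xi) + H_\mu(\eta \mid f^N\xi).
\]
Because $f^N\xi \nearrow \eps$, the error term $H_\mu(\eta \mid f^N\xi) \to 0$ by a martingale-type convergence, and taking the supremum over finite $\eta$ gives $h_\mu(f) \le h_\mu(f,\xi)$.

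The delicate points, and hence the main obstacle, lie in making the two approximation arguments rigorous for measurable rather than finite partitions: justifying continuity of $H_\mu(\,\cdot \mid \xi)$ along $\zeta_k \nearrow \xi$, extending the subadditivity estimate $h_\mu(f,\eta)\le h_\mu(f,\zeta)+H_\mu(\eta|\zeta)$ to the infinite partition $\zeta = f^N\xi$, and verifying $H_\mu(\eta\mid f^N\xi)\to 0$. All of these ultimately rest on the increasing martingale convergence theorem for conditional expectations and on finiteness of $H_\mu(f\xi|\xi)$, which holds for leaf-subordinated partitions. These are precisely the technical ingredients worked out in~\cite{LY85b} and~\cite{BP07}.
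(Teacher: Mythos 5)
Your reduction contains a genuine error, not just unfinished technicalities. You claim that only conditions (2) and (3) of a leaf-subordinated partition enter, i.e.\ that $h_\mu(f) = H_\mu(f\xi|\xi)$ holds for any increasing measurable partition with $\xi_f = \eps$. The point partition $\xi = \eps$ satisfies both conditions ($f\eps \geq \eps$ trivially, and $\eps_f = \eps$), yet $H_\mu(f\eps \mid \eps) = 0$ while $h_\mu(f)$ may be positive -- so the ``generalized Kolmogorov--Sinai statement'' you reduce to is false as stated. The failure is located precisely in your step (b): the subadditivity estimate $h_\mu(f,\eta) \le h_\mu(f,\zeta) + H_\mu(\eta\mid\zeta)$ is a theorem about countable partitions of finite entropy, and it does \emph{not} extend to continuous measurable $\zeta$ with $h_\mu(f,\zeta)$ replaced by the extended quantity $H_\mu(f\zeta\mid\zeta)$: taking $\zeta = f^N\eps = \eps$ in your displayed inequality would ``prove'' $h_\mu(f,\eta)=0$ for every finite $\eta$ in every system. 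Martingale convergence of the error term $H_\mu(\eta \mid f^N\xi)$ cannot rescue this, because the inequality itself is wrong in the needed generality. (Your direction (a) is fine, and in fact reproduces the paper's closing remark that $H_\mu(f\xi|\xi) \le h_\mu(f)$ holds for \emph{any} uniformly contracting increasing partition.)

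What is missing is exactly condition (4): the upper bound $h_\mu(f) \le H_\mu(f\xi|\xi)$ genuinely uses the requirement that the tail $\bigwedge_{n\geq 0} f^{-n}\xi$ have measurable hull $\HHH(\Pi^-)$, which by~\eqref{eqn:generatingPinsker} (Theorem B of~\cite{LY85a}) coincides with the Pinsker partition $\pi(f)$. This hypothesis is what excludes $\xi = \eps$ and accounts for the information your error term misses: the information about $\eta$ not captured by $f^N\xi$ accumulates in the tail, and one must know the tail carries zero entropy. This is how Corollary 5.3 of~\cite{LY85a} -- the proof the paper actually cites for Lemma~\ref{lem:hisH} -- proceeds. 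Note also that the paper sketches a shortcut available in the uniformly hyperbolic case which avoids your approximation scheme entirely: take the \emph{finite} partition $\eta$ into rectangles, which satisfies $\eta^- = \xi$ and $\eta_f = \eps$; then the classical Kolmogorov--Sinai theorem and Rokhlin's formula give $h_\mu(f) = h_\mu(f,\eta) = H_\mu(f\eta^- \mid \eta^-) = H_\mu(f\xi\mid\xi)$ directly, with the subtler tail argument needed only beyond uniform hyperbolicity.
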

\begin{proof}
Corollary 5.3 in~\cite{LY85a}.
\end{proof}
Finally, one must show that $H_\mu(f\xi | \xi) = 0$ if and only if $\Pi^-$ is measurable; the result for $\Pi^+$ follows upon considering $f^{-1}$.

\emph{Step 1}.  To fill in some of the details of this outline, we turn first to the question of existence of leaf-subordinated partitions; this is Lemma 3.1.1 in~\cite{LY85a}, and Theorem 9.4.1 in~\cite{BP07} (although the latter deals only with the case where $\mu$ is absolutely continuous).

To construct $\xi$, divide the manifold into \emph{rectangles} -- that is, domains which exhibit the local
product structure of the manifold.\footnote{Such rectangles are of critical importance in the construction of \emph{Markov partitions}, a key tool in relating smooth dynamics to symbolic dynamics.}  More precisely, a rectangle is a domain $X\subset M$ which admits a diffeomorphism $\phi\colon X \to [0,1]^N$ such that the connected component of $\phi(W^{s,\epsilon}(x)\cap
X)$ containing $x$ is given by the set of points in $[0,1]^N$ whose first $N-k$ coordinates match those of $\phi(x)$, and similarly for $W^{u,\epsilon}(x)$, with the last $k$ coordinates matching; here $N$ is the dimension of $M$ and $k$ the dimension of the stable manifolds.

Such a partition into rectangles may be constructed in a variety of ways -- for example, by using a triangulation of the manifold $M$. Further, a standard argument along the lines of Exercises~\ref{ex:uncountablesum}--\ref{ex:basis} allows us to assume that the boundary of each rectangle has measure zero.

Now consider the partition $\xi_0$ whose elements are connected components of the stable manifolds $W^s$ intersected with a rectangle. This guarantees part of the first property, that our partition is a refinement of $\Pi^-$; to obtain an expanding partition, pass to the further refinement
\[
\xi \coloneqq \bigvee_{n=0}^\infty f^{-n} \xi_0
\]
which may be denoted $\xi=(\xi_0)^-$ using our earlier notation.  Thus $\xi$ satisfies property (2).

To see that almost every element of $\xi$ contains a ball in $W^s$, we must be slightly more careful in our construction of the rectangles, choosing them so that the measure of a $\delta$-neighbourhood of the boundary decreases exponentially with $\delta$.  Using this fact, and the fact that $\xi_0$ refines $\Pi^-$ so that the size of elements in $f^{-n}\xi_0$ grows exponentially, it is possible to show that typical elements of $\xi_0$ are only cut finitely many times during the refinement into $\xi$, which establishes property (1).

Because $\xi$ is a refinement of the partition into stable manifolds, we may bound the diameter of elements of $f^n\xi$ from above, and the bound is exponentially decreasing in $n$.  Thus $\xi_T = \bigvee_{n=0}^\infty f^n\xi = \eps$, the partition into points, so (3) holds, and we obtain (4) similarly, using the fact that $f^{-1}$ expands elements of $\xi$ exponentially along the leaves $W^s$, and so $\Pi(\xi) \coloneqq \bigwedge_{n=0}^\infty f^{-n} \xi = \HHH(\Pi^-)$.  Thus $\xi$ is the leaf-subordinated partition we were after.

\emph{Step 2}.  Now we want to describe the entropy of $f$ in terms of the entropy of $\xi$; this is accomplished by Lemma~\ref{lem:hisH}.

Regarding the proof of this lemma, recall from basic entropy theory that if $\eta$ is a finite or countable partition with $\eta_f=\eps$, then we say that $\eta$ is a \emph{generating partition}, and we have
\[
h_\mu(f) = h_\mu(f,\eta) = H_\mu(f\eta|\eta^-) = H_\mu(f\eta^-|\eta^-);
\]
thus the result would follow if $\xi_0$ was finite or countable, since $\xi=(\xi_0)^-$.  However, $\xi_0$ is continuous, so its elements have zero measure, and we cannot use this argument directly.  In the uniformly hyperbolic case, we can simply use the finite partition $\eta$ into rectangles, which refines to $\xi_0$
under iterations of $f^{-1}$.  In the general setting (for in fact versions of this theorem are true beyond the uniformly hyperbolic case), one needs a more subtle argument, as given in~\cite{LY85a}.

For a finite generating partition $\eta$, a basic result from entropy theory says that
\begin{equation}\label{eqn:generatingPinsker}
\Pi(\eta) = \pi_\mu(f),
\end{equation}
the Pinsker partition, and so if $\eta^- = \xi$, property (4) of a leaf-subordinated partition guarantees that
\[
\HHH(\Pi^-) = \pi(f),
\]
that is, that the Pinsker partition is the measurable hull of the partition into global unstable manifolds.  The fact that~\eqref{eqn:generatingPinsker} holds in general is~\cite[Theorem B]{LY85a} (stated there in terms of the associated $\sigma$-algebras), and so $\Pi^-$ is measurable if and only if it is equivalent mod zero to the Pinsker partition.

\emph{Step 3.}  With Lemma~\ref{lem:hisH} in hand, note that $h_\mu(f) = H_\mu(f\xi|\xi)=0$ if and only if $H_\mu(f^n\xi|\xi)=0$ for any (all) $n\geq 0$, and recall that if any element of $\xi$ is split into two elements of positive conditional measure in $f^n\xi$, then information is gained and the conditional entropy is positive.  Since we have an exponentially decreasing upper bound on the size of elements in $f^n\xi$, we see that if $\mu$ is not atomic, then there exists $n$ such that the refinement $f^n\xi\vee\xi$ splits some partition element of positive measure into two (or more) elements of positive measure, which guarantees $H_\mu(f^n\xi|\xi)>0$, and hence $h_\mu(f)>0$.

Thus if $h_\mu(f)=0$, then $\mu$ is atomic, with at most one atom in each element of $\xi$.  In this case, the set of atoms on each leaf $W^s$ is discrete, and since a discrete set gets denser (rarer) under forward (backward) iteration, and the measure $\mu$ is invariant, one can see that each leaf has at most one atom, and
so the conditional measures are in fact $\delta$-measures.\footnote{One must work slightly harder to show
that the conditional measure cannot be atomic with dense support -- in this case the idea is to focus on the \emph{big} atoms.} In particular, taking the union of the supports of these $\delta$-measures, we have a set of full measure which intersects each leaf exactly once (the so-called \emph{Fubini's nightmare}), and hence $\Pi^-$ is equivalent mod zero to the point partition $\eps$, which in the zero entropy case is also the Pinsker
partition $\pi(f)$.  Hence $h_\mu(f)=0$ implies that $\Pi^-$ is measurable.


It remains to prove the implication in the other direction, that measurability of $\Pi^-$ implies zero entropy.
Suppose $\Pi^-$ is measurable; then we have a system of conditional measures on global stable leaves, and each measure is finite.  The main idea is to argue that if $h_\mu(f)>0$, we may obtain arbitrarily small bounds on the conditional measure of any element of $\xi$, which will then show that all such elements have conditional measure zero.  This is a contradiction since countably many of them cover each global stable leaf, which has positive measure.

Let us make this more explicit: for a given $x$, let $C_n(x) \in f^{-n}\xi$ denote the element of $f^{-n}\xi$ containing $x$, and define conditional information functions $I_n$ by
\[
I_n(x) = -\log \mu_{C_{n+1}(x)} (C_n(x)),
\]
as in~\eqref{eqn:condinfo}, for which
\[
H_\mu(f^{-(n+1)}\xi | f^{-n}\xi ) = \int_X I_n(x) \,d\mu(x)
\]
For any $n$, the left hand side is equal to $h_\mu(f,\xi)=h_\mu(f)$, and so we see that $h_\mu(f)=\int_X
I_n(x) \,d\mu(x)$.  Further, it is apparent that
\[
\sum_{k=0}^{n-1} I_k(x) = -\log \mu_{C_n(x)} (C_0(x))
\]
and that $\mu_{C_n(x)}$ converges weakly to $\mu_{W^s(x)}$, where the latter comes from the system of conditional probability measures on global stable leaves, which exists by the assumption that the partition into
global stable leaves is measurable.  So to obtain our contradiction, we need only show that $\sum_{k=0}^\infty I_k(x)$ diverges unless $I_k$ vanishes almost everywhere.

How are the $I_k$ related to each other?  Note that given a system $\{\mu_C\}_C$ of conditional measures on elements of $f^{-n}\xi$, the pullback $\{f^*\mu_C\}_C$ is a system of conditional measures on elements of $f^{-(n+1)}\xi$, with respect to which the conditional information functions coincide.  However, since conditional measures are unique up to a constant, we do in fact have $I_{k+1}=I_k$, and the result follows.
\end{proof}

As an aside, note that the key property of $W^s$ that was used in the proof of Lemma~\ref{lem:hisH} was uniform contraction along its leaves. In general, we could take $W$ to be \emph{any} uniformly contracting partition, and we would have a version of the lemma with equality replaced by the inequality
\[
H_\mu(f\xi|\xi) \leq h_\mu(f).
\]
Taking $W$ to be the foliation in a single stable direction (say a subspace corresponding to a negative Lyapunov exponent), this allows us to speak of the contribution made by certain directions (or equivalently, certain Lyapunov exponents) to the entropy.

\subsection{Conditional measures on global leaves}

Conditional measures are a very useful tool, and we would like to use them on the partitions $\Pi^{\pm}$.  However, the theorem on existence of conditional measures only applies to \emph{measurable} partitions, and as we have seen, the partitions into global stable or unstable manifolds are only measurable in the zero entropy case.  Thus for systems with positive entropy, we cannot apply the theorem directly; however, by restricting our attention to a small section of the manifold, a rectangle, we may consider conditional measures on $W^s$ and $W^u$ within that domain.

Of course, we could choose another rectangle, which may overlap the first, and obtain conditional measures there as well; how will these two sets of conditional measures relate on the intersection?

The answer is as simple as we could hope for, and is best visualised by considering two subsets $A,B \subset X$ of positive measure with nontrivial intersection. Conditional measures $\mu_A$ and $\mu_B$ are defined in the obvious way, as the normalised restriction of $\mu$ to the appropriate domain, and it is easy to see that given $E\subset A\cap B$, we have
\[
\mu_A(E) = \frac{\mu(E)}{\mu(A)} = \frac{\mu(B)}{\mu(A)} \frac{\mu(E)}{\mu(B)} 
= \frac{\mu(B)}{\mu(A)} \mu_B(E).
\]
That is, $\mu_A$ and $\mu_B$ are proportional to each other; a similar result holds for conditional measures on stable and unstable manifolds.  If $\mu_{W^s(x)}$ and $\tilde{\mu}_{W_s(x)}$ are two families of conditional measures on stable manifolds coming from different rectangles, then they are proportional on the intersection of the two rectangles.  However, because the conditional measure on each leaf is normalised, the constant of proportionality may vary from leaf to leaf.

In this way we may define a $\sigma$-finite measure on each leaf, by gluing together conditional measures on rectangles, a procedure that is important for certain constructions in rigidity theory.

\begin{figure}[tbp]
	\includegraphics{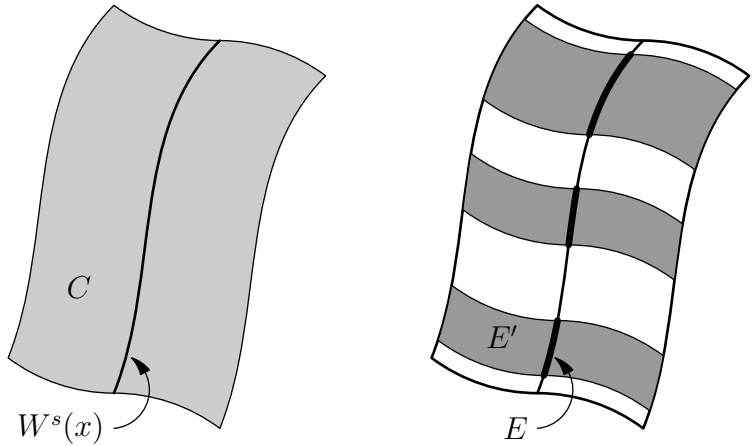}
	\caption{A geometrically intuitive interpretation of conditional measures.}
	\label{fig:cond-meas}
\end{figure}

We may think of the conditional measure on a leaf $W^s(x)$ as being the result of a limiting process.  Having fixed a rectangle, we have a product structure, and may consider small cylinders $C$ around the leaf, whose cross-sections are transversal to the leaf, as shown in Figure~\ref{fig:cond-meas}.  Given a set $E\subset W^s(x)$, then, we may consider its product $E'$ with this transversal cross-section, and approximate $\mu_{W^s(x)}(E)$ by $\mu(E')/\mu(C)$.  We would like to say that in the limit as the size of the cross-section goes to zero, this quantity converges to the conditional measure.

The one caveat regarding this interpretation is that just as $\mu_C$ is only defined for almost every leaf, so also the limit is only guaranteed to exist on almost every leaf,\footnote{Compare this with the statement of the Lebesgue density theorem, that almost every point is a density point for a given measure.} and so it may fail for the particular leaf we are interested in at a given time.  This is a manifestation of the fact that even if $\mu$ itself is rather ``nice'', the conditional measures may have very irregular dependence on the transversal direction.

\subsection{Non-uniform hyperbolicity, the Pesin Entropy Formula, and the Ledrappier--Young Theorem}

In the non-uniformly hyperbolic setting (that is, when the system has only non-zero Lyapunov exponents at almost every point), all of the above results go through more or less unchanged, with the caveat that now the structure of the foliations is intimately dependent on the measure.  (A complete description is given in~\cite{BP07}.)  We are only guaranteed existence of $W^s$ and $W^u$ at $\mu$-a.e.\ point, and there are some extra technical difficulties in the construction of $\xi^-$, which we shall not get into here.

The word ``foliation'' must be used guardedly in this setting; here it refers to a family of immersed manifolds which vary continuously in the tranvserse direction when we restrict to particular compact subsets (the Pesin sets).  On these sets, all estimates are uniform, but the Pesin sets themselves are not invariant.

Given a diffeomorphism $f\colon M\to M$ and an $f$-invariant measure $\mu$, the Multiplicative Ergodic Theorem of Oseledets guarantees the existence of the Lyapunov exponents $\chi_1 (x) < \cdots < \chi_k (x)$ at almost every point, along with the corresponding subspaces $E_1(x)\subset\cdots\subset E_k(x)=T_x M$.  These geometric quantities give infinitesimal rates of expansion and contraction, and are independent of the measure; however, if $\mu$ is ergodic then they are constant a.e., and so we may speak of the Lyapunov exponents of an ergodic measure without fear of ambiguity.

The following fundamental inequality, which states that the entropy is bounded above by the sum of the positive Lyapunov exponents, is due  to Margulis (for absolutely continuous measures) and Ruelle (in the general case):

\begin{theorem}
If $f\colon M\to M$ is a $C^1$ diffeomorphism of a smooth compact Riemannian manifold preserving a Borel probability measure $\mu$, and $d_i(x) = \dim(E_i(x))$ is the multiplicity of the $i^\text{th}$ Lyapunov exponent at $x$, then
\begin{equation}\label{eqn:Ruelles}
h_\mu(f) \leq \int \sum_{\chi_i(x) > 0} d_i(x) \chi_i(x)\,d\mu(x).
\end{equation}
\end{theorem}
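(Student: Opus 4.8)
The plan is to prove the inequality in the classical way, following Ruelle and Mañé: bound the metric entropy by the one-step exponential rate at which $Df$ expands volume, and then, after passing to high iterates, identify that rate with the sum of positive Lyapunov exponents via the Oseledets theorem already invoked above. First I would reduce to the ergodic case. Both sides of \eqref{eqn:Ruelles} are affine under the ergodic decomposition (the left by affinity of the entropy, the right because it is the integral of a fixed measurable function), so it suffices to treat ergodic $\mu$, for which $\chi_i$ and $d_i$ are a.e.\ constant. I would also use $h_\mu(f)=\sup_{\mathcal P} h_\mu(f,\mathcal P)$ over finite partitions, reducing matters to bounding $h_\mu(f,\mathcal P)$ for partitions $\mathcal P$ of small diameter.

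The entropy-theoretic heart is the inequality
\[
h_\mu(f,\mathcal P) \le H_\mu(f^{-1}\mathcal P \mid \mathcal P),
\]
which follows from the telescoping identity $H_\mu(\bigvee_{k=0}^{n-1} f^{-k}\mathcal P) = H_\mu(\mathcal P) + \sum_{j=1}^{n-1} H_\mu(f^{-j}\mathcal P \mid \bigvee_{k=0}^{j-1} f^{-k}\mathcal P)$, combined with monotonicity of conditional entropy under coarsening the conditioning partition and the invariance of $\mu$. Decomposing over the conditioning partition and using the crude bound $H\le \log(\#\text{elements})$ on each fiber, I would estimate
\[
H_\mu(f^{-1}\mathcal P \mid \mathcal P) \le \int_X \log N(x)\, d\mu(x),
\]
where $N(x)$ counts the elements of $\mathcal P$ met by the image $f(P(x))$ of the element $P(x)\ni x$; here one uses $f^{-1}P' \cap P \neq \emptyset \iff P' \cap f(P) \neq \emptyset$, so that the relevant count is governed by forward expansion and hence by the positive exponents.

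It remains to bound $N(x)$ geometrically. For $\mathcal P$ a grid of cubes of side $\delta$, the image $f(P(x))$ is to first order the parallelepiped $Df_x(P(x))$, whose axes have lengths $\delta\,\sigma_i(x)$ for the singular values $\sigma_1(x)\ge\cdots$ of $Df_x$. The number of $\delta$-cubes it meets is $\lesssim C\prod_i \max(1,\sigma_i(x)) = C\exp\sum_i \log^+\sigma_i(x)$, the contracting directions contributing only a bounded factor. Applying this to $f^n$ (whose entropy is $n\,h_\mu(f)$), dividing by $n$, and taking the supremum over fine $\mathcal P$, I obtain
\[
h_\mu(f) \le \frac1n \int_X \sum_i \log^+ \sigma_i(Df^n_x)\, d\mu(x) + \frac{\log C_n}{n}.
\]
Finally I would let $n\to\infty$: by the Oseledets theorem $\frac1n \sum_i \log^+\sigma_i(Df^n_x) \to \sum_{\chi_i(x)>0} d_i(x)\chi_i(x)$ a.e., and since this quantity is dominated by $\log^+\|Df\|_\infty$ (bounded on the compact $M$), dominated convergence yields \eqref{eqn:Ruelles}, provided the distortion constants $C_n$ grow subexponentially.

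I expect the geometric counting to be the main obstacle. With only $C^1$ regularity the image of a cube is merely approximately the parallelepiped $Df_x(P(x))$, so the bound $N(x)\lesssim C\exp\sum_i\log^+\sigma_i(x)$ must be made uniform and its error absorbed into a constant that does not spoil the final limit; one typically fixes $\delta$ small relative to the modulus of continuity of $Df$ and tracks how the constant depends on $n$ when the argument is run for $f^n$. Verifying that these $f^n$-distortion constants $C_n$ are subexponential, and justifying the interchange of limit and integral, are the technical points demanding the most care.
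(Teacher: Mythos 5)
The paper offers no proof of this inequality; it simply cites Theorem 10.2.1 of Barreira--Pesin, and the proof given there is precisely the classical Ruelle argument you have sketched. Your outline is sound: the reduction $h_\mu(f,\mathcal{P})\le H_\mu(f^{-1}\mathcal{P}\mid\mathcal{P})$ via telescoping and invariance is correct, the fiberwise bound by $\log N(x)$ is the right combinatorial step, and the singular-value count for the image of a small cube is the standard geometric input. One remark resolves the technical worry you flag at the end: there is no need to verify subexponential growth of the constants $C_n$, because the quantifiers can be ordered so that the issue never arises. For each \emph{fixed} $n$ one has $n\,h_\mu(f)=h_\mu(f^n)=\sup_{\mathcal{P}}h_\mu(f^n,\mathcal{P})$, so you may choose the partition diameter $\delta$ \emph{after} fixing $n$, small relative to the modulus of continuity of $Df^n$; the image $f^n(P)$ then lies in a slight enlargement of the parallelepiped $Df^n_x(P)$, and the counting constant depends only on the dimension of $M$ (and on the uniform chart structure used to build the cube-like partition on the manifold), not on $n$. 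Dividing by $n$ and letting $n\to\infty$ then works exactly as you say, with the integrand dominated by $\dim M\cdot\log^+\sup_x\|Df_x\|$ (finite by compactness and $C^1$); alternatively, subadditivity of $a_n=\int\sum_i\log^+\sigma_i(Df^n_x)\,d\mu$ and Kingman's theorem replace dominated convergence. The reduction to ergodic $\mu$ is legitimate but, as your own limit argument shows, not actually needed.
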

\begin{proof}
Theorem 10.2.1 in~\cite{BP07}.
\end{proof}

Pesin gave conditions under which equality holds.

\begin{theorem}[Pesin Entropy Formula]
If in addition to the above hypotheses we have that $f$ is $C^{1+\alpha}$ and $\mu$ is absolutely continuous, then
\begin{equation}\label{eqn:Pesin}
h_\mu(f) = \int \sum_{\chi_i(x) > 0} d_i(x) \chi_i(x) \,d\mu(x).
\end{equation}
\end{theorem}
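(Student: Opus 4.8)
The plan is to prove the reverse of Ruelle's inequality~\eqref{eqn:Ruelles}. Since that inequality already furnishes $h_\mu(f)\le\int\sum_{\chi_i(x)>0}d_i(x)\chi_i(x)\,d\mu(x)$, it suffices to establish the opposite bound; in fact the argument below produces equality in one stroke. The guiding idea is to transplant the apparatus of leaf-subordinated partitions and Lemma~\ref{lem:hisH} from the stable foliation to the \emph{unstable} one, and then to cash in the absolute continuity of $\mu$ by evaluating the resulting conditional entropy as a genuine Jacobian integral.

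First I would construct a leaf-subordinated partition $\xi$ adapted to the global unstable manifolds $\Pi^+$. Because the unstable manifolds of $f$ are exactly the stable manifolds of $f^{-1}$, this is the construction of Step~1 in the proof of Theorem~\ref{thm:zeromeas} applied verbatim to $f^{-1}$; it yields a measurable partition whose elements are open unstable plaques, with $f^{-1}\xi\ge\xi$, with $\xi_f=\eps$, and with $\Pi(\xi)=\HHH(\Pi^+)$. Applying the unstable counterpart of Lemma~\ref{lem:hisH} (that is, Lemma~\ref{lem:hisH} for $f^{-1}$) and using $h_\mu(f)=h_\mu(f^{-1})$ gives
\[
h_\mu(f)=H_\mu(f^{-1}\xi\mid\xi),
\]
so the whole question collapses to a single conditional entropy computed along unstable plaques.

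The crux is to identify this conditional entropy with $\int_X\log J^u(x)\,d\mu(x)$, where $J^u(x)=|\det(Df|_{T_xW^u(x)})|$ is the unstable Jacobian. This is the one place where the hypotheses $f\in C^{1+\alpha}$ and $\mu\ll\mathrm{Vol}$ are indispensable: one must show that the system of conditional measures $\{\mu_C\}_{C\in\xi}$ on unstable plaques is absolutely continuous with respect to the induced Riemannian leaf volume. Granting this, each $\mu_C$ has a density, $f$ restricts to each plaque as a smooth expanding map whose Jacobian is $J^u$, and a Rokhlin-type calculation (the infinitesimal version of the mass-distribution bookkeeping in the proof of Theorem~\ref{thm:measpart}) delivers $H_\mu(f^{-1}\xi\mid\xi)=\int_X\log J^u\,d\mu$.

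Finally I would evaluate this integral by the Multiplicative Ergodic Theorem of Oseledets: since $\tfrac1n\log|\det(Df^n|_{T_xW^u})|\to\sum_{\chi_i(x)>0}d_i(x)\chi_i(x)$ for $\mu$-a.e.\ $x$ and $\mu$ is $f$-invariant, the one-step integral $\int\log J^u\,d\mu$ equals $\int\sum_{\chi_i(x)>0}d_i(x)\chi_i(x)\,d\mu$, which is~\eqref{eqn:Pesin}. The decisive obstacle is unquestionably the absolute-continuity step: showing that an absolutely continuous measure disintegrates into absolutely continuous conditionals along the unstable foliation is equivalent to the absolute continuity of that foliation's holonomy, and this is precisely where the H\"older regularity $C^{1+\alpha}$ cannot be relaxed---Pugh's examples show the formula genuinely fails in the purely $C^1$ category. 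Everything else here is a faithful transcription of machinery already assembled in the excerpt.
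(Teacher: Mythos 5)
Your proposal is correct and is essentially the argument the paper itself indicates: the paper's ``proof'' is a citation to Theorem 10.4.1 of~\cite{BP07}, but the surrounding discussion states exactly your strategy---combine Ruelle's inequality~\eqref{eqn:Ruelles} with a leaf-subordinated partition for the unstable foliation as in Theorem~\ref{thm:zeromeas}, reduce via Lemma~\ref{lem:hisH} to a conditional entropy, and use absolute continuity of the conditional measures on $W^u$ (the step where $C^{1+\alpha}$ and $\mu\ll\mathrm{Vol}$ enter) to evaluate it as $\int_X \log|J^u_x f|\,d\mu(x)$, which Oseledets identifies with the Lyapunov sum. The only caveat worth flagging is that absolute continuity of the disintegration of a given $\mu$ along $W^u$ \emph{follows from} (rather than being equivalent to) absolute continuity of the holonomies, but this does not affect the argument.
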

\begin{proof}
Theorem 10.4.1 in~\cite{BP07}.
\end{proof}

Note that the integral in~\eqref{eqn:Ruelles} and~\eqref{eqn:Pesin} is the exponential rate of volume expansion
in the unstable direction, and may also be written as
\[
\int_X \log |J_x^u f | \,d\mu(x),
\]
where $J_x^u \coloneqq J_x|_{W_x^u}$ is the Jacobian on the unstable manifold.

The proof of Pesin's entropy formula relies on the construction of leaf-subordinated partitions outlined in the proof of Theorem~\ref{thm:zeromeas}. The key step is to show that the conditional measures on $W^u$ are absolutely continuous, which allows one to establish bounds on the rate at which the volume of elements in the refined partitions decreases.

In fact, it turns out that no particular regularity of $\mu$ in the stable direction is required for Pesin's entropy formula to hold, which led Ledrappier and Young to prove the following:

\begin{theorem}[Ledrappier-Young]\label{thm:LY}
Let $f\colon M\to M$ be a $C^2$ diffeomorphism of a compact Riemannian manifold $M$ preserving a Borel probability measure $\mu$.  Then $\mu$ has absolutely continuous conditional measures on unstable manifolds if and only if~\eqref{eqn:Pesin} holds.
\end{theorem}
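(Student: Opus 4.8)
The plan is to transfer the machinery built for Theorem~\ref{thm:zeromeas} to the unstable side and reduce the whole statement to a comparison between two Jacobians along unstable leaves: the \emph{geometric} one, $J_x^u f$, and a \emph{measure-theoretic} one encoded by the conditional measures of $\mu$. First I would pass to the ergodic decomposition and construct a leaf-subordinated partition $\xi$ for the unstable foliation exactly as in Step~1 of the proof of Theorem~\ref{thm:zeromeas}, now applied to $f^{-1}$ (whose stable manifolds are the unstable manifolds of $f$): each element of $\xi$ is an open piece of an unstable leaf, $\xi$ is increasing under $f^{-1}$ (so $f^{-1}\xi\geq\xi$), $\xi_f=\eps$, and $\Pi(\xi)=\HHH(\Pi^+)$. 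The analogue of Lemma~\ref{lem:hisH} for $f^{-1}$, together with $h_\mu(f)=h_\mu(f^{-1})$, then yields $h_\mu(f)=H_\mu(f^{-1}\xi\mid\xi)$, realising the entropy as a conditional-information integral over the leaves.

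The central identity to establish is a Rokhlin-type formula for this conditional entropy in terms of the conditional (leaf) measures $\{\mu_C\}_{C\in\xi}$. Writing $I(x)=-\log\mu_{\xi(x)}\big((f^{-1}\xi)(x)\big)$ and noting $(f^{-1}\xi)(x)=f^{-1}(\xi(fx))\subseteq\xi(x)$, we have $h_\mu(f)=\int_X I\,d\mu$, so the integrand records precisely how $f^{-1}$ distorts the conditional measures of $\mu$ along leaves. Comparing $\mu_{\xi(x)}$ with normalised Riemannian volume $m^u$ on the same leaf piece, and using the change of variables $f^{-1}\colon W^u(fx)\to W^u(x)$ whose leafwise Jacobian is $|J_x^u f|^{-1}$, converts this into the inequality
\[
h_\mu(f)\;\leq\;\int_X \log|J_x^u f|\,d\mu(x),
\]
which is exactly the unstable restriction of~\eqref{eqn:Ruelles}; by the remark following~\eqref{eqn:Pesin} the right-hand side equals the sum of positive Lyapunov exponents. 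The key point is that the \emph{defect} in this inequality is an integrated relative-entropy (Kullback--Leibler) term measuring the discrepancy between $\mu_{\xi(x)}$ and $m^u$ on each leaf; the $C^2$ hypothesis enters here through bounded distortion of $\log|J_x^u f|$ along leaves, which makes the geometric reference measure $m^u$ transform with controlled densities and legitimises the comparison.

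From here the two directions split. For the easy direction (absolute continuity $\Rightarrow$~\eqref{eqn:Pesin}), I would assume $\mu_{\xi(x)}\ll m^u$ and use the equivariance of the conditional measures under $f$ (they are proportional on overlaps, as in the discussion of conditional measures on global leaves) to force the densities to transform by $|J_x^u f|$; the relative-entropy defect then vanishes and the inequality becomes the equality~\eqref{eqn:Pesin}. For the hard direction (\eqref{eqn:Pesin}$\Rightarrow$ absolute continuity), equality makes the integrated relative entropy vanish, giving absolute continuity of $\mu_{\xi(x)}$ with respect to $m^u$ on a.e.\ element of $\xi$; propagating this from the pieces of $\xi$ to the full global leaf is achieved by saturating under $f$ (using $\xi_f=\eps$) and a martingale argument over the refining sequence $\bigvee_{k=0}^n f^{-k}\xi$, in the spirit of Lemma~\ref{lem:phconv}.

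The main obstacle is this last direction: upgrading a single scalar equality of integrals to a \emph{leafwise, pointwise} absolutely continuous statement. Vanishing of the averaged relative entropy only gives the conclusion for a.e.\ leaf piece, and one must show that the densities assemble consistently across the refinements $f^{-n}\xi$ and survive the passage to the limit without the irregular transverse dependence (the ``Fubini's nightmare'' phenomenon) destroying leafwise regularity. Controlling the densities uniformly under iteration---where bounded distortion from $C^2$ and exponential contraction of $f^{-1}$ along $W^u$ are both essential---and passing to the limit is the genuinely hard analytic core, and is precisely what the Ledrappier--Young construction supplies.
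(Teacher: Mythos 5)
First, a point of comparison: the paper does not actually prove Theorem~\ref{thm:LY} --- its ``proof'' is the citation to Theorem~A of~\cite{LY85a} --- so your sketch must be measured against the genuine Ledrappier--Young argument. At the level of architecture you track it faithfully: a partition subordinate to unstable leaves built as in Step~1 of the proof of Theorem~\ref{thm:zeromeas} applied to $f^{-1}$, the entropy identity $h_\mu(f)=H_\mu(f^{-1}\xi\mid\xi)$ via the analogue of Lemma~\ref{lem:hisH}, and a Jensen-type defect analysis of the unstable restriction of~\eqref{eqn:Ruelles}. But there is a genuine flaw at the center.

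Your key identity --- that the defect $\int\log|J_x^u f|\,d\mu - h_\mu(f)$ is an integrated relative entropy between $\mu_{\xi(x)}$ and normalised leaf volume $m^u$ --- is false as stated, and both directions break with that reference measure. Any SRB measure whose leafwise densities are nonconstant (the generic case: for SRB measures the density is forced to be the nontrivial function $\rho_x$ below, up to a leafwise constant) satisfies~\eqref{eqn:Pesin} while the relative entropy of $\mu_{\xi(x)}$ against $m^u$ is strictly positive, so the defect cannot be that quantity; dually, in your hard direction, vanishing of such a defect would force $\mu_{\xi(x)}$ to \emph{equal} normalised leaf volume (Gibbs' inequality is an equality precisely when the measures coincide, not merely when one is absolutely continuous with respect to the other) --- a conclusion both too strong and wrong. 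The crux of Ledrappier's argument is the choice of reference measure: on each $\xi(x)$ one takes $\nu_x$ with density proportional to $\rho_x(y)=\prod_{k\geq 1} J^u f(f^{-k}x)\big/ J^u f(f^{-k}y)$ with respect to $m^u$; the $C^2$ (or $C^{1+\alpha}$) bounded distortion you invoke is used exactly here, to make this infinite product converge and stay bounded away from $0$ and $\infty$. The family $\{\nu_x\}$ is equivariant in precisely the way conditional measures must be; the defect against $\{\nu_x\}$ is then a true Gibbs-inequality term, and equality in~\eqref{eqn:Pesin}, combined with the generating property $\xi_f=\eps$, yields $\mu_{\xi(x)}=\nu_x$ almost everywhere --- hence absolute continuity with the explicit density $\rho_x$. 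This also repairs your easy direction: for an arbitrary SRB measure it is the defect against $\nu_x$, not against $m^u$, that vanishes. A final, smaller correction: once $\mu_{\xi(x)}\ll m^u$ on almost every element of a subordinate partition you are done, since absolute continuity on unstable manifolds is defined through such partitions; so the ``propagation to global leaves'' and martingale step you single out as the hard core are not where the difficulty lies. The substantial analytic inputs are the construction of $\rho_x$ and the entropy identity $h_\mu(f)=H_\mu(f^{-1}\xi\mid\xi)$ in the nonuniform setting (Corollary~5.3 of~\cite{LY85a}), which your sketch quotes but which is itself a major step.
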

\begin{proof}
Theorem A in~\cite{LY85a}.
\end{proof}

A measure $\mu$ satisfying the conditions of Theorem~\ref{thm:LY} is called an \emph{SRB measure}, after Sinai, Ruelle, and Bowen.  Despite having absolutely continuous conditional measures on unstable manifolds, such measures are generally singular on $M$.

SRB measures may or may not exist for a particular system; however, in the Anosov case, they always exist, and in fact one obtains two SRB measures, one corresponding to forward iterations (which is a.c.\ in the unstable direction), and one corresponding to backward iterations (which is a.c.\ in the stable direction).  The two coincide if and only if they are absolutely continuous on $M$.

In fact, Ledrappier and Young proved a more general theorem than Theorem~\ref{thm:LY}; in~\cite{LY85b}, they show that~\eqref{eqn:Pesin} holds for arbitrary measures $\mu$, when the multiplicities $d_i(x)$ are replaced with coefficients $\delta_i^\mu$, which depend on the geometry of $\mu$ along the various foliations corresponding to different Lyapunov exponents, but which have no explicit dependence on the dynamics, despite the fact that $h_\mu(f)$ is a dynamical quantity.

For the largest Lyapunov exponent, the coefficient $\delta_n^\mu$ represents the Hausdorff dimension of the conditional measures on the corresponding foliation.  However, this does not extend to intermediate exponents, as shown by a counterexample due to Ruelle and Wilkinson, for which~\eqref{eqn:Pesin} holds, but the conditional measure in the \emph{slow} unstable direction is atomic, and so the foliation is singular~\cite{RW01}.  A formula for these coefficients may be found in Theorem 14.1.18 of~\cite{BP07}.



\end{document}